

\documentclass[10pt]{amsart}

\setlength{\textwidth}{124mm}
\setlength{\textheight}{186mm}

\usepackage{amsmath,amsthm,amsfonts,amssymb,amscd}

\theoremstyle{plain}
\newtheorem{theorem}{Theorem}[section]
\newtheorem{lemma}[theorem]{Lemma}
\newtheorem{proposition}[theorem]{Proposition}

\theoremstyle{remark}

\newtheorem{remark}[theorem]{Remark}

\numberwithin{equation}{section}

\newcommand{\Z}{Z}
\newcommand{\z}{\mathbb {Z}}

\newcommand{\af}{\alpha}
\newcommand{\bt}{\beta}
\newcommand{\x}{{\cdot}}

\newcommand{\mlt}{\mathrm{Mlt}}
\newcommand{\inn}{\mathrm{Inn}}
\newcommand{\aut}{\mathrm{Aut}}


\begin{document}

\title[Free commutative automorphic loop]{The free commutative automorphic $2$-generated loop of nilpotency class $3$}

\author[Barros]{Dylene Agda Souza de Barros}
\address[Barros, Grishkov]{Institute of Mathematics and Statistics, University of Sao Paulo, Rua do Mat\~{a}o, 1010, Cidade Universit\'aria, S\~{a}o Paulo, SP, Brazil, CEP 05508-090}
\email[Barros]{dylene@ime.usp.br}

\author[Grishkov]{Alexander Grishkov}
\email[Grishkov]{shuragri@gmail.com}

\author[Vojt\v{e}chovsk\'y]{Petr Vojt\v{e}chovsk\'y}
\address[Vojt\v{e}chovsk\'y]{Department of Mathematics, University of Denver, 2360 S Gaylord St, Denver, Colorado 80208, USA}
\email[Vojt\v{e}chovsk\'y]{petr@math.du.edu}

\begin{abstract}
A loop is automorphic if all its inner mappings are automorphisms. We construct the free commutative automorphic $2$-generated loop of nilpotency class $3$. It has dimension $8$ over the integers.
\end{abstract}

\keywords{Free commutative automorphic loop, automorphic loop, associator calculus}

\subjclass[2000]{20N05}

\thanks{Dylene Agda Souza de Barros supported by FAPESP - process number 2010/16112-4. Alexander Grishkov supported by FAPESP and CNPq (Brazil). The research stay of Dylene Agda Souza de Barros and Alexander Grishkov at the University of Denver was partially supported by the Simons Foundation Collaboration Grant 210176 to Petr Vojt\v{e}chovsk\'y.}

\maketitle

\section{Introduction}

A \emph{loop} is a nonempty set $Q$ with a binary operation $\x$ such that for every $a\in Q$ the left and right translations $L_a$, $R_a:Q\to Q$, $bL_a = a\x b$, $bR_a=b\x a$ are bijections of $Q$, and there is an identity element $1\in Q$ satisfying $1\x a = a\x 1 = a$ for all $a\in Q$. We will also write the multiplication $\x$ as juxtaposition, and we will use $\x$ to indicate the priority of multiplications. For instance, $a(bc\x d)$ means $a\x ((b\x c)\x d)$.

The \emph{multiplication group} of $Q$ is the permutation group $\mlt(Q) = \langle L_a,\,R_a;\;a\in Q\rangle$ generated by all left and right translations. The stabilizer of $1$ in $\mlt(Q)$ is the \emph{inner mapping group} $\inn(Q)$. It is well known, cf. \cite{Br}, that $\inn(Q) = \langle L_{a,b},\,R_{a,b},\,T_a;\;a,\,b\in Q\rangle$, where $L_{a,b} = L_aL_bL_{ba}^{-1}$, $R_{a,b} = R_aR_bR_{ab}^{-1}$, $T_a = R_aL_a^{-1}$.

A loop $Q$ is \emph{automorphic} if $\inn(Q)\le\aut(Q)$, that is, if every inner mapping of $Q$ is an automorphism of $Q$. Note that groups are automorphic loops.

Automorphic loops were first studied in \cite{BrPa}, where it was proved, among other results, that automorphic loops form a variety and are \emph{power-associative}, that is, every element generates a group. It was shown in \cite{JoKiNaVo} that automorphic loops have the antiautomorphic inverse property $(ab)^{-1}=b^{-1}a^{-1}$. In particular, commutative automorphic loops have the \emph{automorphic inverse property}, or \emph{AIP}, $(ab)^{-1}=a^{-1}b^{-1}$. For an introduction to the structural theory and the history of automorphic loops, see \cite{KiKuPhVo}. For an introduction to the structural theory of commutative automorphic loops, see \cite{JeKiVo1}, \cite{JeKiVo2}.

This paper is concerned with free objects in the variety of commutative automorphic loops, in particular with the free commutative automorphic $2$-generated loop of nilpotency class $3$.

The \emph{center} of a loop $Q$ is the associative subloop $\Z(Q) = \{a\in Q;\;a\varphi = a$ for every $\varphi\in \inn(Q)\}$. Thus $Z(Q)$ consists of all elements $a\in Q$ that commute and associate with all other elements of $Q$. Define $\Z_0(Q) = 1$, $\Z_1(Q)=Z(Q)$, and for $i\ge 1$ let $\Z_{i+1}(Q)$ be the preimage of $\Z(Q/\Z_i(Q))$ under the canonical projection $Q\to Q/\Z_i(Q)$. Then a loop $Q$ is \emph{nilpotent of class $n$} if $\Z_{n-1}(Q)\ne Q = \Z_n(Q)$.

It was shown independently in \cite{Cs} and \cite{JeKiVo3} that for an odd prime $p$ every commutative automorphic loop of order $p^k$ is nilpotent. By \cite{JeKiVo3}, a commutative automorphic loop of order $p^2$ is a commutative group, but there exist nonassociative commutative automorphic loops of order $p^3$---these were constructed in \cite{JeKiVo2} and classified up to isomorphism in \cite{BaGrVo}.

One of the main tools used in the classification \cite{BaGrVo} was the description of the free commutative automorphic $2$-generated loop of nilpotency class $2$. This paper can therefore be seen as a natural continuation of the program begun in \cite{BaGrVo}. A related project is \cite{GrSh}, where heavy associator calculus was used to determine the bases and orders of free commutative Moufang loops with up to seven generators.

\medskip

For $n\ge 2$, let $F_n(x,y)$ be the free commutative automorphic loop of nilpotency class $n$ on free generators $x$, $y$.

For elements $a$, $b$, $c$ of a loop $Q$ denote by $(a,b,c)$ the \emph{associator} of $a$, $b$, $c$, that is, the unique element satisfying the equation $ab\x c = (a\x bc)(a,b,c)$.

We obtained the following description of $F_2(x,y)$ in \cite{BaGrVo}:

\begin{theorem}[{\cite[Theorem 2.3]{BaGrVo}}]
Let $F_2(x,y)$ be the free commutative automorphic loop of nilpotency class $2$ with free generators $x$, $y$, and let $u_1=(x,x,y)$, $u_2 = (x,y,y)$. Then every element of $F_2(x,y)$ can be written uniquely as $x^{a_1}y^{a_2}u_1^{a_3}u_2^{a_4}$ for some $a_1$, $a_2$, $a_3$, $a_4\in\mathbb Z$, and the multiplication in $F_2(x,y)$ is given by
\begin{displaymath}
    (x^{a_1}y^{a_2}u_1^{a_3}u_2^{a_4})(x^{b_1}y^{b_2}u_1^{b_3}u_2^{b_4})
    {=}x^{a_1+b_1}y^{a_2+b_2}u_1^{a_3+b_3-a_1b_1(a_2+b_2)}u_2^{a_4+b_4+a_2b_2(a_1+b_1)}.
\end{displaymath}
\end{theorem}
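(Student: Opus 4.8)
The plan is to squeeze $F_2=F_2(x,y)$ between an upper bound obtained from associator calculus and a lower bound obtained from an explicit model on $\mathbb Z^4$. Write $Q=F_2(x,y)$ and let $Q'$ be the subloop generated by all associators of $Q$. Since $Q$ has nilpotency class $2$, the quotient $Q/\Z(Q)$ is associative, so every associator of $Q$ lies in $\Z(Q)$; hence $Q'\le\Z(Q)$ is central, so it is a normal abelian subloop, and $Q/Q'$ is an associative, commutative, $2$-generated loop. An associative commutative loop is an abelian group; and since $Q$ is free, the images of $x$ and $y$ satisfy no relations in $Q/Q'$ beyond those forced by the abelian group axioms, so $Q/Q'\cong\mathbb Z^2$. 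Thus every element of $Q$ can be written as $x^{a_1}y^{a_2}w$ with $a_1,a_2\in\mathbb Z$ uniquely determined and $w\in Q'$, parenthesisation being irrelevant because $Q'$ is central.

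Next I would pin down $Q'$ by associator calculus. Three identities hold in every commutative loop of class $2$ using only commutativity and centrality of associators: the flexible law $(a,b,a)=1$; the outer-swap law $(a,b,c)(c,b,a)=1$; and the cyclic law $(a,b,c)(b,c,a)(c,a,b)=1$. The automorphic hypothesis is needed only to prove additivity of the associator in each variable: since $L_{b,c}\in\aut(Q)$ and one computes $aL_{b,c}=a\x(c,b,a)^{-1}$, applying $L_{b,c}$ to a product $a_1a_2$ and using centrality yields $(c,b,a_1a_2)=(c,b,a_1)(c,b,a_2)$; additivity in the first slot follows from outer-swap, and additivity in the middle slot follows from combining additivity in an outer slot with the outer-swap and cyclic laws. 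Hence the associator is $\mathbb Z$-trilinear on $Q/Q'$, so $Q'$ is generated by the associators $(g,h,k)$ with $g,h,k\in\{x,y\}$; power-associativity kills $(x,x,x)$ and $(y,y,y)$, the flexible law kills $(x,y,x)$ and $(y,x,y)$, and outer-swap gives $(y,x,x)=u_1^{-1}$ and $(y,y,x)=u_2^{-1}$. Therefore $Q'=\langle u_1,u_2\rangle$, and every element of $Q$ has the form $x^{a_1}y^{a_2}u_1^{a_3}u_2^{a_4}$; moreover, reassociating $(x^{a_1}y^{a_2})(x^{b_1}y^{b_2})$ one step at a time, recording the central associators that appear, and evaluating them by trilinearity as powers of $u_1$ and $u_2$, one sees that the product of two normal forms is forced to be the expression in the statement.

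It remains to prove that the normal form is unique, equivalently that $Q'$ is free abelian of rank $2$. For this I would take the displayed formula as the definition of an operation on $L=\mathbb Z^4$ and verify directly that $L$ is a commutative automorphic loop of nilpotency class $2$: solvability of $AX=B$ and $XA=B$ (the first two coordinates are governed by an additive bijection, and the remaining two are then uniquely solvable), commutativity (the exponent formulas are symmetric in their two arguments), the computation of the inner mappings $L_{A,B}$ of $L$ together with the check that each is an automorphism, and the identification $\Z(L)=\{0\}^2\times\mathbb Z^2\ne L$ from the associator formula in $L$. Since $L$ is generated by $(1,0,0,0)$ and $(0,1,0,0)$, freeness of $Q$ gives a surjective homomorphism $\pi\colon Q\to L$ sending $x,y$ to these generators, and tracing the definitions of $u_1$ and $u_2$ through $\pi$ shows $\pi(x^{a_1}y^{a_2}u_1^{a_3}u_2^{a_4})=(a_1,a_2,a_3,a_4)$. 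Hence $\pi$ is injective on the normal forms, and since by the previous paragraph every element of $Q$ is a normal form, $\pi$ is a bijection. Therefore the normal forms of $Q$ are pairwise distinct, $\pi$ is an isomorphism, and the multiplication of $Q$ is exactly the displayed formula.

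I expect the main obstacle to be the associator calculus of the second paragraph---isolating precisely what the automorphic axiom contributes (additivity in one slot) and bootstrapping from there to full trilinearity and to $Q'=\langle u_1,u_2\rangle$---together with the routine but lengthy bookkeeping in the direct verification that the formula on $\mathbb Z^4$ really defines a commutative automorphic loop of class $2$, especially the automorphy check.
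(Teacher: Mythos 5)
Your proposal is correct and takes essentially the approach the paper itself uses: this theorem is imported from \cite{BaGrVo} without proof here, but the paper's treatment of the class-$3$ analogue (Theorem \ref{Th:Main}) follows exactly your outline---a normal form obtained from centrality and trilinearity of the associator, the multiplication formula by stepwise reassociation, and uniqueness via the surjection from the free loop onto an explicitly verified model on $\mathbb Z^n$. Your derivation of additivity in the middle slot from outer-slot additivity together with the outer-swap and cyclic laws is a valid minor variant of Proposition \ref{Pr:2}, which instead uses the inner mapping $L_aR_dL_a^{-1}R_d^{-1}$.
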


As we are going to see, to describe $F_3(x,y)$ is considerably more difficult.

Let us call an associator \emph{compounded} if it is of the form $(a,b,c)$ where at least one of $a$, $b$, $c$ is again an associator $(u,v,w)$. It is easy to see, cf. Proposition \ref{Pr:Compounded}, that a commutative loop is of nilpotency class at most $3$ if and only if all compounded associators are central.

Ultimately we prove in Theorem \ref{Th:Main} that every element of $F_3(x,y)$ is of the canonical form
\begin{displaymath}
    (x^{a_1}y^{a_2}\x u_1^{a_3}u_2^{a_4})v_1^{a_5}v_2^{a_6}v_3^{a_7}v_4^{a_8},
\end{displaymath}
where
\begin{align*}
    u_1 &= (x,x,y),     &u_2 &= (x,y,y),     &v_1 &= (x,x,u_1),\\
    v_2 &= (x,x,u_2),   &v_3 &= (y,y,u_1),   &v_4 &= (y,y,u_2),
\end{align*}
and where the multiplication formula is as in Lemma \ref{Lm:FullMult}. (The canonical form can be parsed unequivocally because the compounded associators $v_1$, $v_2$, $v_3$, $v_4$ are central.) This is accomplished in a series of steps:

In Section \ref{Sc:Symmetry} we study symmetries and linear properties of the associator map $(\_,\_,\_)$ in commutative automorphic loops of nilpotency class $3$. We conclude that in $F_3(x,y)$ it suffices to look at compounded associators of the form $(a,b,(c,d,e))$ where each $a$, $b$, $c$, $d$, $e$ is either $x$ or $y$. In Section \ref{Sc:Powers} we study powers within associators and derive a formula for $(a^i,b^j,c^k)$. In Section \ref{Sc:Reduction} we discover several nontrivial relations among compounded associators of $F_3(x,y)$, reducing all compounded associators to just $v_1$, $v_2$, $v_3$, $v_4$.

The multiplication formula for $F_3(x,y)$ is derived in Lemma \ref{Lm:FullMult}. A critical step in proving the main result, Theorem \ref{Th:Main}, consists of showing that the multiplication formula of Lemma \ref{Lm:FullMult} actually yields an automorphic loop. This follows by straightforward calculation (one merely needs to verify that the generators $L_{a,b}$ of the inner mapping group are automorphisms), but the calculation is extremely tedious and error-prone and we have therefore decided to delegate it to a computer. The Mathematica \cite{Mathematica} code that accomplishes the calculation can be downloaded from the website of the third-named author, \texttt{www.math.du.edu/\~{}petr}. Once we know that the formula of Lemma \ref{Lm:FullMult} yields an automorphic loop $Q$, it is easy to show that $F_3(x,y)$ is free and isomorphic to $Q$.

Recall that the \emph{associator subloop} $A(Q)$ of $Q$ is the least normal subloop of $Q$ containing all associators (so $Q/A(Q)$ is a group). The \emph{left nucleus $N_\lambda(Q)$}, \emph{middle nucleus $N_\mu(Q)$} and \emph{right nucleus $N_\rho(Q)$} consist of all elements $a\in Q$ such that $(a,b,c)=1$, $(b,a,c)=1$, $(b,c,a)=1$ for every $b$, $c\in Q$, respectively. Then the \emph{nucleus} $N(Q)$ is defined by $N(Q) = N_\lambda(Q) \cap N_\mu(Q)\cap N_\rho(Q)$. We conclude the paper by calculating the associator subloop, nuclei and the center of $Q=F_3(x,y)$.

\begin{remark}
In the beginning of this paper the proofs we offer give all the details, but later on we gradually rely more and more on the reader to provide intermediate steps in calculations. All such steps can be obtained in a straightforward fashion, albeit sometimes with considerable time commitment. More details will be found in the dissertation \cite{BaPhD} of the first-named author.
\end{remark}

\section{Symmetry and linearity in associators}\label{Sc:Symmetry}

Recall that the associator in any loop $Q$ is well-defined modulo $Z(Q)$, that is, $(a,b,c) = (az_1,bz_2,cz_3)$ for any $a$, $b$, $c\in Q$ and $z_1$, $z_2$, $z_3\in Z(Q)$. In any commutative loop the identity
\begin{equation}\label{Eq:Flex}
    (a,b,a)=1
\end{equation}
holds because $ab\x a = a\x ab = a\x ba$. It is well known that in any commutative loop of nilpotency class $2$ we have $(a,b,c) = (c,b,a)^{-1}$, $(a,b,c)(b,c,a)(c,a,b)=1$. We will use all these observations and the following well-known proposition without reference.

\begin{proposition}\label{Pr:Compounded}
Let $Q$ be a commutative loop.
\begin{enumerate}
\item[(i)] $Q$ has nilpotency class at most $2$ if and only if all associators are central.
\item[(ii)] $Q$ has nilpotency class at most $3$ if and only if all compounded associators $((a,b,c),d,e)$, $(a,(b,c,d),e)$, $(a,b,(c,d,e))$ are central.
\end{enumerate}
\end{proposition}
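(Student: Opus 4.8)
The plan is to prove both parts by translating the condition "nilpotency class at most $n$" into statements about central associators, using the characterization $Z(Q) = \{a : a\varphi = a \text{ for all } \varphi\in\inn(Q)\}$ together with the fact that an element is central precisely when it commutes and associates with everything. For part (i), the key observation is that in a commutative loop, $Q/Z(Q)$ is a group if and only if all associators $(a,b,c)$ lie in $Z(Q)$: indeed, $Q/Z(Q)$ is commutative automatically, so it is a group exactly when it is associative, i.e.\ when $(aZ(Q), bZ(Q), cZ(Q))$ is trivial for all $a,b,c$, which (since associators are well-defined modulo the center) means $(a,b,c)\in Z(Q)$. Now $Q$ has class at most $2$ means $Z_2(Q) = Q$, i.e.\ $Q/Z(Q)$ has trivial center, which — being a commutative group — is automatic once $Q/Z(Q)$ is a group. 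So class $\le 2$ $\iff$ $Q/Z(Q)$ is a group $\iff$ all associators are central. The only mildly delicate point is the direction "all associators central $\Rightarrow$ $Q/Z(Q)$ associative"; here I would verify directly from the associator identity $ab\x c = (a\x bc)(a,b,c)$ that if $(a,b,c)\in Z(Q)$ then $(aZ)(bZ)\x(cZ) = (aZ)((bZ)(cZ))$ in $Q/Z(Q)$.

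For part (ii), I would argue that $Q$ has class at most $3$ if and only if $Q/Z(Q)$ has class at most $2$, and then apply part (i) to the quotient $\overline{Q} = Q/Z(Q)$: the latter has class $\le 2$ iff all its associators are central in $\overline{Q}$, i.e.\ iff $(\bar a,\bar b,\bar c) \in Z(\overline Q) = Z_2(Q)/Z(Q)$ for all $a,b,c\in Q$. Unwinding, this says $(a,b,c)\in Z_2(Q)$ for every associator. The remaining task is to show that the condition "$(a,b,c)\in Z_2(Q)$ for all $a,b,c$" is equivalent to "all compounded associators are central". An element $w$ lies in $Z_2(Q)$ iff $wZ(Q)\in Z(Q/Z(Q))$, which by the center characterization (commuting and associating modulo $Z(Q)$) amounts to: $(w,d,e)$, $(d,w,e)$, $(d,e,w)\in Z(Q)$ for all $d,e\in Q$ (the commuting part is free by commutativity). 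Taking $w = (a,b,c)$ gives exactly that the three families of compounded associators $((a,b,c),d,e)$, $(a,(b,c,d),e)$ (after relabeling/using symmetry, but I would just state it for all three positions directly), $(a,b,(c,d,e))$ are central, which is the claim.

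The one genuine obstacle is making the passage "$w\in Z_2(Q)$ $\iff$ all associators with one entry $w$ are central" fully rigorous, because $Z_2(Q)/Z(Q) = Z(Q/Z(Q))$ refers to centrality in the quotient, and I must check that an associator $(u,v,w')$ in $Q$ with one argument in $Z_2(Q)$ being central in $Q$ is the same thing as the corresponding associator in $Q/Z(Q)$ being central in $Q/Z(Q)$ — this uses that associators are well-defined modulo the center and that the natural projection sends associators to associators. This is routine but must be spelled out. I would also remark that, by the symmetry properties of the associator in commutative loops (flexibility $(a,b,a)=1$ and, modulo the center, the cyclic and inversion symmetries recalled just before the proposition), it suffices in practice to check one of the three positions, though for the statement of the proposition I would present all three for clarity. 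Everything else is a direct unwinding of definitions.
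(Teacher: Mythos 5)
Your proposal is correct and follows essentially the same route as the paper: part (i) comes down to $Q/Z(Q)$ being an abelian group exactly when all associators are central (the paper phrases this via the associator subloop, $A(Q)\le Z(Q)$, while you check associativity of the quotient directly), and part (ii) applies (i) to $Q/Z(Q)$ and unwinds what centrality of $(\overline{a},\overline{b},\overline{c})$ in the quotient means, which is precisely the paper's argument. One verbal slip to fix: ``$Z_2(Q)=Q$, i.e.\ $Q/Z(Q)$ has trivial center'' should read ``$Q/Z(Q)$ coincides with its center''---which is what your subsequent reasoning in fact uses.
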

\begin{proof}
Suppose that $Q$ has nilpotency class at most $2$. Then $Q/Z(Q)$ is an abelian group. Since $A(Q)$ is the least normal subloop $S$ such that $Q/S$ is an abelian group, it follows that $A(Q)\le Z(Q)$. The converse is proved by reversing the argument.

Let us write $\overline{a}$ for $aZ(Q)\in Q/Z(Q)$. Suppose that $Q$ has nilpotency class at most $3$. Then $Q/Z(Q)$ has nilpotency class at most $2$ and thus $(\overline{a},\overline{b},\overline{c})\in Z(Q/Z(Q))$ for every $a$, $b$, $c\in Q$ by (i). This is equivalent to $((\overline{a},\overline{b},\overline{c}),\overline{d},\overline{e}) = (\overline{d},(\overline{a},\overline{b},\overline{c}),\overline{e}) = (\overline{d},\overline{e},(\overline{a},\overline{b},\overline{c})) = 1_{Q/Z(Q)}$ and thus to $((a,b,c),d,e)$, $(d,(a,b,c),e)$, $(d,e,(a,b,c))\in Z(Q)$. The converse is again proved by reversing the argument.
\end{proof}

We proceed to show that in a commutative automorphic loop of nilpotency class $2$ the associator is linear in all coordinates.

\begin{lemma}\label{Lm:Lba}
Let $Q$ be a loop and let $a$, $b$, $c\in Q$ be such that $(a,b,c)\in\Z(Q)$. Then $cL_{b,a} = c(a,b,c)^{-1}$, $aR_{b,c} = a(a,b,c)$, and $bL_aR_cL_a^{-1}R_c^{-1} = b(a,b,c)$.
\end{lemma}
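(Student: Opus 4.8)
The plan is to verify each of the three identities by a direct computation, the only tools being the defining equation $ab\x c = (a\x bc)(a,b,c)$ of the associator and the fact that a central element may be moved past any bracketing and any factor (it both commutes and associates with everything). Note that $Q$ need not be commutative, so throughout one must use only the hypothesis $(a,b,c)\in Z(Q)$ and never commutativity of the product $bc$ itself. In each case the useful trick is not to manipulate the inverse translations $L_{ab}^{-1}$, $R_{bc}^{-1}$, $L_a^{-1}$, $R_c^{-1}$ symbolically, but to guess the claimed image and confirm it against the corresponding forward translation.

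For the first identity I would first push $c$ through the two leading translations: since $xL_a = a\x x$, expanding $L_{b,a}=L_bL_aL_{ab}^{-1}$ gives $cL_bL_a = a(bc)$. It then remains to apply $L_{ab}^{-1}$, i.e.\ to identify the unique $w$ with $ab\x w = a(bc)$. Guessing $w=c(a,b,c)^{-1}$ works, because $(a,b,c)^{-1}\in Z(Q)$ yields
\begin{displaymath}
ab\x\bigl(c\,(a,b,c)^{-1}\bigr)=(ab\x c)(a,b,c)^{-1}=\bigl((a\x bc)(a,b,c)\bigr)(a,b,c)^{-1}=a(bc),
\end{displaymath}
so $cL_{b,a}=c(a,b,c)^{-1}$. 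The second identity is handled the same way: expanding $R_{b,c}=R_bR_cR_{bc}^{-1}$ gives $aR_bR_c=ab\x c$, and one checks that $w=a(a,b,c)$ satisfies $w\x bc = ab\x c$, since centrality of $(a,b,c)$ gives $(a(a,b,c))(bc)=(a\x bc)(a,b,c)=ab\x c$; applying $R_{bc}^{-1}$ then yields $aR_{b,c}=a(a,b,c)$.

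For the third identity I would track $b$ through the four maps in the order dictated by the right-action convention (so $L_a$, then $R_c$, then $L_a^{-1}$, then $R_c^{-1}$). First $bL_a=ab$ and $(ab)R_c=ab\x c=(a\x bc)(a,b,c)$. Applying $L_a^{-1}$ amounts to finding $w$ with $a\x w=(a\x bc)(a,b,c)$; since $(a,b,c)$ is central the choice $w=(bc)(a,b,c)$ works, as $a\x\bigl((bc)(a,b,c)\bigr)=(a\x bc)(a,b,c)$. Finally applying $R_c^{-1}$ amounts to finding $v$ with $v\x c=(bc)(a,b,c)$, and $v=b(a,b,c)$ works since $\bigl(b(a,b,c)\bigr)\x c=(bc)(a,b,c)$ by centrality. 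Hence $bL_aR_cL_a^{-1}R_c^{-1}=b(a,b,c)$.

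There is no deep obstacle here; the only places to be careful are the consistent use of the right-action convention in the composite maps and the correct unwinding of the inverse translations — which, as noted, is cleanest done by proposing the answer and checking it forward. These computations also make transparent why centrality of $(a,b,c)$ (rather than of the whole associator subloop) is exactly what is needed.
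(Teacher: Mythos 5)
Your proof is correct and follows essentially the same route as the paper: in each case one guesses the claimed image and verifies it by a forward computation from the defining identity $ab\x c=(a\x bc)(a,b,c)$ together with centrality of $(a,b,c)$, with the inner mappings expanded and composed under the right-action convention exactly as in the paper's argument.
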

\begin{proof}
Since $(a,b,c)$ is central, we have $ab\x c(a,b,c)^{-1} = (ab\x c)(a,b,c)^{-1} = a\x bc$, or $cL_bL_a = (c(a,b,c)^{-1})L_{ab}$, or $cL_{b,a} = c(a,b,c)^{-1}$. Also, $ab\x c = a(a,b,c)\x bc$, or $aR_bR_c = (a(a,b,c))R_{bc}$, or $aR_{b,c} = a(a,b,c)$. Finally, $ab\x c = a\x (b(a,b,c))c$ yields the last equality.
\end{proof}

\begin{proposition}\label{Pr:2}
Let $Q$ be an automorphic loop of nilpotency class $2$. Then $(ab,c,d) = (a,c,d)(b,c,d)$, $(a,bc,d) = (a,b,d)(a,c,d)$, $(a,b,cd) = (a,b,c)(a,b,d)$ for every $a$, $b$, $c$, $d\in Q$.
\end{proposition}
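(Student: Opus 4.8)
The plan is to prove linearity in, say, the first coordinate, that is $(ab,c,d) = (a,c,d)(b,c,d)$; the other two coordinates will follow by the same method (or, once the first is known, by combining it with the symmetry relations $(a,b,c)=(c,b,a)^{-1}$ and $(a,b,c)(b,c,a)(c,a,b)=1$ that are available in nilpotency class $2$). Since $Q$ has class $2$, every associator is central by Proposition \ref{Pr:Compounded}(i), so Lemma \ref{Lm:Lba} applies to every triple. The idea is to compute the action of the inner mapping $R_{c,d}$ on the product $ab$ in two ways: on one hand, $R_{c,d} = R_cR_dR_{cd}^{-1}$ is an automorphism of $Q$ because $Q$ is automorphic, so $(ab)R_{c,d} = (aR_{c,d})(bR_{c,d})$; on the other hand, Lemma \ref{Lm:Lba} tells us that $xR_{c,d} = x(x,c,d)$ for every $x\in Q$.

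So first I would write $(ab)R_{c,d} = (ab)(ab,c,d)$ by Lemma \ref{Lm:Lba} applied to the triple $(ab,c,d)$. Next, using that $R_{c,d}$ is an automorphism, $(ab)R_{c,d} = (aR_{c,d})(bR_{c,d}) = (a(a,c,d))(b(b,c,d))$, again invoking Lemma \ref{Lm:Lba} for the triples $(a,c,d)$ and $(b,c,d)$. Since $(a,c,d)$ and $(b,c,d)$ are central, the right-hand side equals $(ab)(a,c,d)(b,c,d)$ (central elements can be pulled out of any product and reassociated freely). Comparing the two expressions for $(ab)R_{c,d}$ and cancelling $ab$ by the (left or right) translation bijectivity gives $(ab,c,d) = (a,c,d)(b,c,d)$.

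For the third coordinate $(a,b,cd)$ I would instead use $L_{b,a}$: by Lemma \ref{Lm:Lba}, $xL_{b,a} = x(a,b,x)^{-1}$, and $L_{b,a}$ is an automorphism, so $(cd)L_{b,a} = (cL_{b,a})(dL_{b,a})$ yields $(cd)(a,b,cd)^{-1} = (c(a,b,c)^{-1})(d(a,b,d)^{-1}) = (cd)(a,b,c)^{-1}(a,b,d)^{-1}$, whence $(a,b,cd) = (a,b,c)(a,b,d)$. The middle coordinate can then be handled either by the conjugation map $L_aR_cL_a^{-1}R_c^{-1}$ from Lemma \ref{Lm:Lba} in exactly the same style, or deduced from the first and third together with the cyclic identity $(a,b,c)(b,c,a)(c,a,b)=1$ and the reflection identity $(a,b,c)=(c,b,a)^{-1}$.

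I do not expect a genuine obstacle here: the argument is a clean two-line computation for each coordinate once Lemma \ref{Lm:Lba} and the automorphism property are in hand. The only point requiring a little care is the bookkeeping with central elements—making sure that when $R_{c,d}$ (or $L_{b,a}$) is applied to a product, the resulting central factors really do collect on the outside regardless of parenthesization, which is exactly the defining property of $Z(Q)$ and the fact that associators are central in class $2$. A secondary small point is the cancellation step: one cancels the common factor $ab$ (resp. $cd$) on the left using that the translation $L_{ab}$ (resp. $L_{cd}$) is a bijection, which is legitimate in any loop.
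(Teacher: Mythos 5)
Your proposal is correct and follows essentially the same route as the paper: apply the inner mappings $R_{c,d}$, $L_{b,a}$ and $L_aR_dL_a^{-1}R_d^{-1}$ as automorphisms and read off the associators via Lemma \ref{Lm:Lba}, using centrality of associators to collect factors. The only cosmetic difference is that the paper derives the third-coordinate identity from the first via $(a,b,c)=(c,b,a)^{-1}$ rather than running $L_{b,a}$ directly, but both arguments are valid and equivalent in substance.
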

\begin{proof}
Since $Q$ is automorphic, the inner mapping $R_{c,d}$ is an automorphism. By Lemma \ref{Lm:Lba}, $ab(ab,c,d) = (ab)R_{c,d} = aR_{c,d}\x bR_{c,d} = a(a,c,d)\x b(b,c,d) = ab(a,c,d)(b,c,d)$ and $(ab,c,d) = (a,c,d)(b,c,d)$ follows. Consequently, $(a,b,cd) = (cd,b,a)^{-1} = ((c,b,a)(d,b,a))^{-1}  = (c,b,a)^{-1}(d,b,a)^{-1} = (a,b,c)(a,b,d)$. Finally, $1L_aR_dL_a^{-1}R_d^{-1}=1$ shows that $L_aR_dL_a^{-1}R_d^{-1}$ is also an inner mapping, so Lemma \ref{Lm:Lba} implies $bc(a,bc,d) = (bc)L_aR_dL_a^{-1}R_d^{-1} = bL_aR_dL_a^{-1}R_d^{-1}\x cL_aR_dL_a^{-1}R_d^{-1} = b(a,b,d)\x c(a,c,d) = bc(a,b,d)(a,c,d)$, and we are done upon canceling $bc$.
\end{proof}

Here is a local version of Proposition \ref{Pr:2}:

\begin{lemma}\label{Lm:Linear}
Let $Q$ be an automorphic loop of nilpotency class $3$, and let $a$, $b$, $c$, $d\in Q$.
\begin{enumerate}
\item[(i)] If $(a,c,d)$, $(b,c,d)\in Z(Q)$ then $(ab,c,d) = (a,c,d)(b,c,d)$.
\item[(ii)] If $(a,b,d)$, $(a,c,d)\in Z(Q)$ then $(a,bc,d) = (a,b,d)(a,c,d)$.
\item[(iii)] If $(a,b,c)$, $(a,b,d)\in Z(Q)$ then $(a,b,cd) = (a,b,c)(a,b,d)$.
\end{enumerate}
\end{lemma}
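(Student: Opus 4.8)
The plan is to mimic the proofs of Proposition \ref{Pr:2}, but replacing the global hypothesis ``$Q$ has nilpotency class $2$'' by the local hypotheses that make the relevant associators central. The point of Proposition \ref{Pr:2} is that each identity follows from applying an inner mapping $\varphi$ (one of $R_{c,d}$, $L_aR_dL_a^{-1}R_d^{-1}$, or a composite obtained via the symmetry $(a,b,c)=(c,b,a)^{-1}$) and using the formulas of Lemma \ref{Lm:Lba}. In the nilpotency class $3$ setting, $Q$ is still automorphic, so those inner mappings are still automorphisms; what we lose is that $a\varphi = a(a,c,d)$ requires $(a,c,d)\in Z(Q)$ for Lemma \ref{Lm:Lba} to apply, and that the resulting products rearrange freely. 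So for each part we just need to check that the associators being manipulated are exactly the ones assumed central.

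Concretely, for (i) I would take $\varphi = R_{c,d}$. Since $(a,c,d)$, $(b,c,d)\in Z(Q)$, Lemma \ref{Lm:Lba} gives $a\varphi = a(a,c,d)$ and $b\varphi = b(b,c,d)$, and $(ab)\varphi = ab(ab,c,d)$ (here $(ab,c,d)$ need not be assumed central---it is simply defined by this equation, but actually it will turn out central a posteriori). Because $\varphi$ is an automorphism, $ab(ab,c,d) = (ab)\varphi = (a\varphi)(b\varphi) = a(a,c,d)\x b(b,c,d)$; since $(a,c,d)$ and $(b,c,d)$ are central we may collect them and cancel $ab$ on the left, obtaining $(ab,c,d) = (a,c,d)(b,c,d)$. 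For (iii) I would first derive it from (i) together with the symmetry $(a,b,cd) = (cd,b,a)^{-1}$ and $(c,b,a) = (a,b,c)^{-1}$, $(d,b,a) = (a,b,d)^{-1}$ (valid since these associators are central by hypothesis, so they behave as in the class $2$ theory modulo $Z(Q)$); then $(a,b,cd) = ((c,b,a)(d,b,a))^{-1} = (a,b,c)(a,b,d)$. For (ii) I would use $\varphi = L_aR_dL_a^{-1}R_d^{-1}$, which fixes $1$ hence is inner, hence an automorphism; Lemma \ref{Lm:Lba} gives $b\varphi = b(a,b,d)$ and $c\varphi = c(a,c,d)$, both central multipliers by hypothesis, so applying $\varphi$ to $bc$ and cancelling yields $(a,bc,d) = (a,b,d)(a,c,d)$.

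The main thing to be careful about---and the only place where the argument is genuinely more delicate than in Proposition \ref{Pr:2}---is the bookkeeping of which associators must be central for each step: in (iii) one must know that $(c,b,a)$ and $(d,b,a)$ (not just $(a,b,c)$ and $(a,b,d)$) are central, which follows because the associator is antisymmetric modulo $Z(Q)$, so $(c,b,a)\equiv(a,b,c)^{-1}$ and centrality is preserved under inversion and under this congruence. Similarly one should check that the symmetry relations $(a,b,c)=(c,b,a)^{-1}$ used in the reduction hold here; they hold modulo $Z(Q)$ in any commutative loop of class $\le 3$ because $Q/Z(Q)$ has class $\le 2$, and when the associators in question are central the congruence becomes equality. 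No heavy computation is needed---each part is a two-line application of Lemma \ref{Lm:Lba} plus an automorphism---so the proof is short; the ``obstacle'' is purely organizational.
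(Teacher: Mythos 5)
Your argument is correct and follows essentially the same route as the paper's: rerun the calculations from Proposition \ref{Pr:2} under the local centrality hypotheses. The one delicate point, which you and the paper handle differently, is that Lemma \ref{Lm:Lba} yields $(ab)R_{c,d}=ab(ab,c,d)$ (resp.\ $(bc)L_aR_dL_a^{-1}R_d^{-1}=bc(a,bc,d)$) only once the associator on the left-hand side is itself known to be central. The paper secures this \emph{before} the computation by passing to $Q/Z(Q)$, which has class $2$, so Proposition \ref{Pr:2} gives $(a,bc,d)=(a,b,d)(a,c,d)z$ with $z\in Z(Q)$, whence $(a,bc,d)\in Z(Q)$ and the class-$2$ calculation goes through verbatim. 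You instead treat the issue a posteriori in (i): writing $(ab)R_{c,d}=ab\cdot s$, the automorphism property forces $s=(a,c,d)(b,c,d)\in Z(Q)$, and then unwinding $R_{c,d}=R_cR_dR_{cd}^{-1}$ gives $(ab)c\cdot d=(ab\cdot s)(cd)=(ab\cdot cd)s$, so $s=(ab,c,d)$ by the definition of the associator. That identification does work and even avoids the quotient argument, but it is the entire content of the step and should be written out; in (ii) you omit it altogether (``applying $\varphi$ to $bc$ and cancelling''), where the same unwinding of $L_aR_dL_a^{-1}R_d^{-1}$ is needed before one may cancel. Part (iii), derived from (i) via the symmetry $(c,b,a)=(a,b,c)^{-1}$ for central associators, is fine and matches the paper's own derivation inside Proposition \ref{Pr:2}.
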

\begin{proof}
Let us prove (ii). Since $Q/Z(Q)$ is automorphic of nilpotency class $2$, Proposition \ref{Pr:2} implies $(a,bc,d) = (a,b,d)(a,c,d)z$ for some $z\in Z(Q)$. This means that $(a,bc,d)$ is central, too. Then the calculation at the end of the proof of Proposition \ref{Pr:2} is still valid (since all associators involved in the calculation are central). The proofs for (i) and (iii) are similar.
\end{proof}

\begin{lemma}\label{Lm:Symmetry}
Let $Q$ be a commutative automorphic loop of nilpotency class $3$. Then
\begin{align}
    ((a,b,c),d,e)^{-1}&=(e,d,(a,b,c)),\label{Eq:13A}\\
    (a,(b,c,d),e)&=(a,e,(b,c,d))((b,c,d),a,e).\label{Eq:MidA}
\end{align}
for every $a$, $b$, $c$, $d$, $e\in Q$.
\end{lemma}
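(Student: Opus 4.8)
The goal is to establish two identities about compounded associators in a commutative automorphic loop $Q$ of nilpotency class $3$. The key structural fact I would exploit throughout is that, by Proposition \ref{Pr:Compounded}(ii), every compounded associator lies in $Z(Q)$; hence in all calculations below the associators appearing are central, which makes them behave additively and lets us freely use Lemma \ref{Lm:Lba}.

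For \eqref{Eq:13A}, I would work in the quotient $Q/Z(Q)$, which is a commutative automorphic loop of nilpotency class at most $2$. There we know the general class-$2$ identity $(u,v,w) = (w,v,u)^{-1}$. Applying this with $u = (a,b,c)$, $v = d$, $w = e$ gives $((a,b,c),d,e) \equiv (e,d,(a,b,c))^{-1} \pmod{Z(Q)}$. But both sides are themselves compounded associators, hence central; so the congruence is in fact an equality in $Z(Q)$, and rearranging yields $((a,b,c),d,e)^{-1} = (e,d,(a,b,c))$. This is the same ``lift a class-$2$ identity through the center'' trick already used in the proof of Lemma \ref{Lm:Linear}.

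For \eqref{Eq:MidA}, I would again pass to $Q/Z(Q)$ and invoke the other standard class-$2$ identity, the cyclic relation $(u,v,w)(v,w,u)(w,u,v) = 1$, taking $u = a$, $v = (b,c,d)$, $w = e$. This gives $(a,(b,c,d),e)\,((b,c,d),e,a)\,(e,a,(b,c,d)) \equiv 1$ modulo $Z(Q)$, and since every factor is a compounded associator (hence central) the relation holds exactly in $Z(Q)$. Now I rewrite two of the three factors using \eqref{Eq:13A}: by \eqref{Eq:13A}, $(e,a,(b,c,d)) = ((b,c,d),a,e)^{-1}$ and $((b,c,d),e,a) = (a,e,(b,c,d))^{-1}$. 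Substituting and solving for $(a,(b,c,d),e)$ gives $(a,(b,c,d),e) = (a,e,(b,c,d))\,((b,c,d),a,e)$, which is exactly \eqref{Eq:MidA}.

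I do not expect a serious obstacle here; the proof is essentially bookkeeping. The one point that needs care is the consistent application of \eqref{Eq:13A}: that identity is stated with the ``outer'' associator in the first slot, so when I encounter a compounded associator with the associator in the second or third slot I must first identify which instance of \eqref{Eq:13A} applies and in which direction (i.e., which of the five entries plays the role of $a,b,c,d,e$). A secondary subtlety is making sure that passing to $Q/Z(Q)$ is legitimate for each identity used — this is fine because both the class-$2$ relation $(u,v,w)=(w,v,u)^{-1}$ and the cyclic relation hold in \emph{every} commutative loop of nilpotency class $2$, and $Q/Z(Q)$ has class at most $2$ — and that ``central modulo $Z(Q)$ plus the expression is already in $Z(Q)$'' upgrades each congruence to an equality, which is where Proposition \ref{Pr:Compounded}(ii) is used.
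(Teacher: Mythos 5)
There is a genuine gap, and it is the central step of your argument. You pass to $Q/Z(Q)$, apply a class-$2$ identity, and obtain a congruence such as $((a,b,c),d,e)\equiv(e,d,(a,b,c))^{-1}\pmod{Z(Q)}$; you then claim that because both sides are compounded associators, hence central, ``the congruence is in fact an equality.'' This inference is invalid: a congruence modulo $Z(Q)$ between two elements that both lie in $Z(Q)$ carries no information at all, since every element of $Z(Q)$ is congruent to every other one (indeed to $1$). Put differently, the image of $(a,b,c)$ in $Q/Z(Q)$ is central there, so $(\overline{(a,b,c)},\overline{d},\overline{e})=\overline{1}$ and the class-$2$ identity you invoke reduces to $\overline{1}=\overline{1}^{-1}$. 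Working modulo the center destroys exactly the quantity the lemma is trying to pin down. The same objection applies verbatim to your derivation of \eqref{Eq:MidA} from the cyclic relation: all three factors are central, so the relation modulo $Z(Q)$ is vacuous. You have also misremembered the trick from Lemma \ref{Lm:Linear}: there the congruence obtained from Proposition \ref{Pr:2} is used only to conclude that $(a,bc,d)$ is \emph{central}, after which the actual equality is re-derived by repeating the computation of Proposition \ref{Pr:2} inside $Q$, which is now legitimate because the associators occurring in it are central. The analogous ``repeat the computation in $Q$'' step is precisely what is missing from your proof.

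The repair is a direct calculation from the definition of the associator, using commutativity and the fact (Proposition \ref{Pr:Compounded}(ii)) that $z=((a,b,c),d,e)$ is central. From $uv\x w=(u\x vw)(u,v,w)$ with $u=(a,b,c)$, $v=d$, $w=e$ one gets $(a,b,c)\x de=((a,b,c)d\x e)z^{-1}$, while commutativity gives $(a,b,c)\x de=ed\x(a,b,c)=(e\x d(a,b,c))(e,d,(a,b,c))=((a,b,c)d\x e)(e,d,(a,b,c))$; cancelling yields \eqref{Eq:13A}. Identity \eqref{Eq:MidA} follows from a similar two-way expansion of $a\x(b,c,d)e=a\x e(b,c,d)$. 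Your instinct that the proof is ``essentially bookkeeping'' is right, but the bookkeeping must be done with actual products in $Q$, not with cosets of $Z(Q)$.
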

\begin{proof}
Note that $z=((a,b,c),d,e)\in Z(Q)$ because $Q$ has nilpotency class $3$. The identity $(a,b,c)d\x e = ((a,b,c)\x de)z$ hence yields $((a,b,c)d\x e)z^{-1}=(a,b,c)\x de$, and we get $((a,b,c)d\x e)z^{-1}=(a,b,c)\x de=ed\x (a,b,c)=(e\x d(a,b,c))(e,d,(a,b,c))=((a,b,c)d\x e)(e,d,(a,b,c))$, which implies \eqref{Eq:13A}. We calculate
\begin{align*}
    &(ae\x(b,c,d))(a,e,(b,c,d))^{-1} = a\x e(b,c,d) = a\x (b,c,d)e \\
    &= (a(b,c,d)\x e)(a,(b,c,d),e)^{-1} = ((b,c,d)\x ae)((b,c,d),a,e)(a,(b,c,d),e)^{-1}\\
    &=(ae\x (b,c,d))((b,c,d),a,e)(a,(b,c,d),e)^{-1},
\end{align*}
which implies $(a,e,(b,c,d))^{-1}=((b,c,d),a,e)(a,(b,c,d),e)^{-1}$, or \eqref{Eq:MidA}.
\end{proof}

\begin{lemma}\label{Lm:2A}
Let $Q$ be a commutative automorphic loop of nilpotency class $3$. Then
\begin{equation}
    (a,(b,c,d),(e,f,g)) = ((a,b,c),d,(e,f,g)) = ((a,b,c),(d,e,f),g) = 1\label{Eq:2A}
\end{equation}
for every $a$, $b$, $c$, $d$, $e$, $f$, $g\in Q$.
\end{lemma}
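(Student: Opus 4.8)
The plan is to show that each of the three displayed compounded associators in \eqref{Eq:2A} is simultaneously equal to its own inverse and, separately, symmetric under an appropriate coordinate swap, forcing it to be trivial. Throughout we work modulo $Z(Q)$, noting that every compounded associator is central because $Q$ has nilpotency class $3$, so all the linearity and symmetry identities from Proposition \ref{Pr:2}, Lemma \ref{Lm:Symmetry} and Lemma \ref{Lm:Linear} apply freely to the associator arguments.

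First I would handle $((a,b,c),(d,e,f),g)$. Put $s=(a,b,c)$ and $t=(d,e,f)$, both central. On one hand, since $s$ is central, $(s,t,g)$ must itself be trivial: the defining identity $st\x g=(s\x tg)(s,t,g)$ combined with centrality of $s$ gives $st\x g = t\x sg\cdot(\text{stuff})$, and because $s$ commutes and associates with everything, $st\x g = s\x tg$, whence $(s,t,g)=1$. Actually this already does it — any associator with a central entry in the \emph{first} slot and the product rearranged using centrality is trivial; more carefully, $(s,t,g)=1$ because $st\cdot g=s\cdot tg$ holds outright when $s\in Z(Q)$. The same one-line argument kills $((a,b,c),d,(e,f,g))$ and $(a,(b,c,d),(e,f,g))$ whenever at least one of the outer arguments is central. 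So in fact the heart of the matter is the observation that $(u,v,w)=1$ as soon as any one of $u,v,w$ lies in $Z(Q)$, applied to the central elements $(a,b,c)$, $(b,c,d)$, $(d,e,f)$, $(e,f,g)$.

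I would therefore structure the proof as: (1) record the elementary fact that in any loop $(u,v,w)=1$ whenever $u\in N_\lambda(Q)$, in particular whenever $u\in Z(Q)$, and likewise for the middle and right entries; (2) observe that in $Q$ of nilpotency class $3$ every (possibly compounded) associator with at least two associator-arguments has an entry that is itself a compounded-associator-producing expression lying in $Z(Q)$ — concretely, $(a,b,c),(b,c,d),(d,e,f),(e,f,g)$ are all central by Proposition \ref{Pr:Compounded}(ii); (3) apply (1) to conclude each of the three expressions in \eqref{Eq:2A} is $1$. For the first expression $(a,(b,c,d),(e,f,g))$ the middle entry $(b,c,d)$ is central; for the second $((a,b,c),d,(e,f,g))$ the left entry $(a,b,c)$ is central; for the third $((a,b,c),(d,e,f),g)$ again the left entry is central.

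The only place requiring any care is justifying step (1) in a possibly nonassociative loop: I would verify that $u\in N_\lambda(Q)$ implies $uv\cdot w = u\cdot vw$ directly from the definition of the left nucleus, hence $(u,v,w)=1$; symmetrically $v\in N_\mu(Q)$ gives $(u,v,w)=1$ and $w\in N_\rho(Q)$ gives $(u,v,w)=1$; and since $Z(Q)\subseteq N_\lambda(Q)\cap N_\mu(Q)\cap N_\rho(Q)$, centrality of any single argument suffices. I do not anticipate a genuine obstacle here — the lemma is essentially immediate once one notices that a class-$3$ hypothesis makes the relevant inner associators central, and a central element in \emph{any} argument trivializes the associator. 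If one prefers to avoid invoking nuclei explicitly, the same conclusion follows by the computation style of Lemma \ref{Lm:Symmetry}: write the defining identity for the compounded associator, push the central inner associator past the surrounding products, and read off that the compounded associator equals its own... it simply cancels, giving $1$.
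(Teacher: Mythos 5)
Your argument has a fatal gap in step (2): you assert that the ordinary associators $(a,b,c)$, $(b,c,d)$, $(d,e,f)$, $(e,f,g)$ are central in $Q$, citing Proposition \ref{Pr:Compounded}(ii). That proposition says no such thing. In a commutative loop of nilpotency class $3$ it is the \emph{compounded} associators $((a,b,c),d,e)$, $(d,(a,b,c),e)$, $(d,e,(a,b,c))$ that are central; the ordinary associators themselves lie only in $\Z_2(Q)$, i.e.\ they are central merely modulo $Z(Q)$. Indeed, if every ordinary associator were central, then by Proposition \ref{Pr:Compounded}(i) the loop would have nilpotency class at most $2$, and the lemma would be vacuous in the intended setting. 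Your step (1) is correct as far as it goes (a central entry in any slot kills an associator), but it cannot be applied to $(a,b,c)$, $(b,c,d)$, $(d,e,f)$ or $(e,f,g)$, since none of these need lie in $Z(Q)$ or in any nucleus. What the class-$3$ hypothesis actually gives is only that each expression in \eqref{Eq:2A} lies in $Z(Q)$; the content of the lemma is that it is in fact trivial, and your proof does not establish that.

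The repair must exploit linearity rather than centrality. Since $Q/Z(Q)$ has nilpotency class $2$, one has $(ef\x g)(e\x fg)^{-1}=(e,f,g)z$ for some $z\in Z(Q)$, and a compounded associator is unchanged when one of its arguments is multiplied by a central element; hence
\begin{displaymath}
    (a,(b,c,d),(e,f,g)) = (a,(b,c,d),(ef\x g)(e\x fg)^{-1}).
\end{displaymath}
Now Lemma \ref{Lm:Linear}(iii) (all the relevant associators here are compounded, hence central, so the hypotheses are met) together with the AIP lets you split the right-hand side as $(a,(b,c,d),ef\x g)\x(a,(b,c,d),e\x fg)^{-1}$, and expanding each factor by linearity yields the same product $(a,(b,c,d),e)(a,(b,c,d),f)(a,(b,c,d),g)$ twice, with opposite signs; everything cancels and the value is $1$. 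The identity $((a,b,c),(d,e,f),g)=1$ then follows from \eqref{Eq:13A}, and $((a,b,c),d,(e,f,g))=1$ by the analogous cancellation. You should rework your proof along these lines; the centrality shortcut is not available.
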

\begin{proof}
Since $Q/\Z(Q)$ is of nilpotency class $2$, we have $(ef\x g)(e\x fg)^{-1} = (e,f,g)z$ for some $z\in Z(Q)$. Then
\begin{displaymath}
    (a,(b,c,d),(e,f,g)) = (a,(b,c,d),(ef\x g)(e\x fg)^{-1}).
\end{displaymath}
The automorphic inverse property and Lemma \ref{Lm:Linear} yield $(a,(b,c,d),(e,f,g))=1$. The identity $((a,b,c),(d,e,f),g)=1$ follows by \eqref{Eq:13A}. The argument for  $((a,b,c),d,(e,f,g))=1$ is similar.
\end{proof}

\begin{lemma}\label{Lm:SymA}
Let $Q$ be a commutative automorphic loop of nilpotency class $3$. Then
\begin{align}
    (a,b,c) &= (c,b,a)^{-1},\label{Eq:13}\\
    (a,b,c) &= (a,c,b)(b,a,c)\label{Eq:123}
\end{align}
for every $a$, $b$, $c\in Q$.
\end{lemma}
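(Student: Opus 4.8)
The plan is to work modulo $Z(Q)$, where both statements are classical, and then eliminate the central correction terms with Lemma~\ref{Lm:2A}. Since $Q$ is commutative automorphic of nilpotency class $3$, the quotient $Q/Z(Q)$ is commutative automorphic of class at most $2$, so there both $(a,b,c)=(c,b,a)^{-1}$ and $(a,b,c)(b,c,a)(c,a,b)=1$ hold; hence in $Q$ each of \eqref{Eq:13}, \eqref{Eq:123} holds up to a factor from $Z(Q)$, and the real point is to show that factor is trivial. For \eqref{Eq:13}, put $X=ab\x c$ and ``rotate'' $X$ using only commutativity of $Q$ and the defining equation $uv\x w=(u\x vw)(u,v,w)$: we have $ab\x c=(a\x bc)(a,b,c)$, while $a\x bc = cb\x a = (c\x ba)(c,b,a) = (ab\x c)(c,b,a)$, so the identity $X=\bigl(X(c,b,a)\bigr)(a,b,c)$ holds exactly in $Q$. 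Now $(c,b,a)$ and $(a,b,c)$ are associators, so Lemma~\ref{Lm:2A} gives $(X,(c,b,a),(a,b,c))=1$, whence $X=X\bigl((c,b,a)(a,b,c)\bigr)$; applying $L_X^{-1}$ leaves $(c,b,a)(a,b,c)=1$, which is \eqref{Eq:13}.

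For \eqref{Eq:123} I would rotate $X=ab\x c$ one step further. With $p_1=(c,a,b)$, $p_2=(b,c,a)$, $p_3=(a,b,c)$, the same kind of computation (each step is one application of commutativity or of the defining relation of an associator) yields the exact identity $X=\bigl(\bigl(Xp_1\bigr)p_2\bigr)p_3$. I then reassociate the right-hand side to $X\bigl((p_1p_2)p_3\bigr)$: the inner step $(Xp_1)p_2=X(p_1p_2)$ is immediate from Lemma~\ref{Lm:2A} since $p_1,p_2$ are associators, and the outer step requires $(X,p_1p_2,p_3)=1$. Here $p_1p_2$ is only a \emph{product} of associators, so Lemma~\ref{Lm:2A} does not apply to it directly; instead $(X,p_1,p_3)=(X,p_2,p_3)=1$ by Lemma~\ref{Lm:2A}, and then Lemma~\ref{Lm:Linear}(ii) gives $(X,p_1p_2,p_3)=(X,p_1,p_3)(X,p_2,p_3)=1$. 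Thus $X=X\bigl((p_1p_2)p_3\bigr)$, and cancelling $X$ gives $\bigl((c,a,b)(b,c,a)\bigr)(a,b,c)=1$. Taking inverses (the inverse in a commutative loop is two-sided and unique), using the automorphic inverse property, and then rewriting $(c,a,b)^{-1}$ and $(b,c,a)^{-1}$ via \eqref{Eq:13} together with commutativity of $Q$ turns this into $(a,b,c)=(a,c,b)(b,a,c)$, which is \eqref{Eq:123}.

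The one genuinely delicate point is the reassociation in the second part: the loop must not be manipulated as if it were a group, and at the single spot where an associator-entry is not literally an associator — namely the factor $p_1p_2$ — one must appeal to the local linearity of Lemma~\ref{Lm:Linear} rather than to Lemma~\ref{Lm:2A}. Everything else is bookkeeping with commutativity and the defining relation for associators, and the passages to $Q/Z(Q)$ are only used to locate which central elements must be shown to vanish.
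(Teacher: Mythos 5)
Your proof is correct and follows essentially the same route as the paper: the same cyclic rewriting of $ab\x c$, with Lemma~\ref{Lm:2A} justifying the reassociation for \eqref{Eq:13}, and with Lemma~\ref{Lm:Linear}(ii) supplying the step $(X,p_1p_2,p_3)=1$ for \eqref{Eq:123}, exactly where the paper cites \eqref{Eq:2A} and Lemma~\ref{Lm:Linear}. The only (harmless) difference is that your opening reduction modulo $Z(Q)$ is not actually needed, since the rotation identities you derive are exact.
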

\begin{proof}
We have $ab\x c = (a\x bc)(a,b,c) = (cb\x a)(a,b,c) = (c\x ba)(c,b,a)\x (a,b,c) = (ab\x c)(c,b,a)\x (a,b,c)$, and the last term can be rewritten as $(ab\x c)\x (c,b,a)(a,b,c)$ by \eqref{Eq:2A}, so \eqref{Eq:13} follows. Similarly, $ab\x c = (a\x bc)(a,b,c) = (b\x ca)(b,c,a)\x (a,b,c) = ((c\x ab)(c,a,b)\x(b,c,a))(a,b,c)$, the last term equals $(ab\x c)((c,a,b)(b,c,a)\x (a,b,c))$ by \eqref{Eq:2A} and Lemma \ref{Lm:Linear}, so we have $(c,a,b)(b,c,a)\x (a,b,c)=1$. By Lemma \ref{Lm:2A}, the AIP and \eqref{Eq:13}, we get $(a,b,c) = ((c,a,b)(b,c,a))^{-1} = (c,a,b)^{-1}(b,c,a)^{-1} = (b,a,c)(a,c,b)$, which is \eqref{Eq:123}.
\end{proof}

Note that in any commutative loop of nilpotency class $3$ we have
\begin{equation}\label{Eq:LInn}
    aL_{b,c} = a(a,b,c)(bc,a,(a,b,c)),
\end{equation}
because
\begin{align*}
    aL_{b,c} &= (c\x ba)L_{cb}^{-1} = (ab\x c)L_{cb}^{-1} = ((a\x bc)(a,b,c))L_{cb}^{-1} = ((bc\x a)(a,b,c))L_{cb}^{-1}\\
     &= (bc\x a(a,b,c)(bc,a,(a,b,c)))L_{bc}^{-1} = a(a,b,c)(bc,a,(a,b,c)).
\end{align*}

\begin{proposition}\label{Pr:AProduct}
Let $Q$ be a commutative automorphic loop of nilpotency class $3$. Then
\begin{align*}
    (ab,c,d) =& (a,c,d)(b,c,d)((a,c,d),a,b)((b,c,d),b,a)\\
        &\x((a,c,d),b,c)((b,c,d),a,c)((a,c,d),b,d)((b,c,d),a,d),\\
    (a,b,cd) =& (a,b,c)(a,b,d)((a,b,c),c,d)((a,b,d),d,c)\\
        &\x((a,b,c),d,b)((a,b,d),c,b)((a,b,c),d,a)((a,b,d),c,a),\\
    (a,bc,d) =& (a,b,d)(a,c,d)((a,b,d),b,c)((a,c,d),c,b)\\
        &\x((a,b,d),c,a)((a,c,d),b,a)((a,b,d),c,d)((a,c,d),b,d).
\end{align*}
\end{proposition}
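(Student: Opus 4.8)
The plan is to establish the first identity directly and then to derive the other two from it formally, using the symmetry identities of Lemma~\ref{Lm:SymA}. For the first identity I would exploit that $Q$ is automorphic, so that $L_{c,d}$ is an automorphism and $(ab)L_{c,d} = (aL_{c,d})(bL_{c,d})$, together with the explicit action of $L_{c,d}$ recorded in \eqref{Eq:LInn}: for every $x\in Q$ one has $xL_{c,d} = x(x,c,d)(cd,x,(x,c,d))$. Substituting $x = ab$ on the left and $x = a$, $x = b$ on the right gives
\begin{displaymath}
    (ab)(ab,c,d)(cd,ab,(ab,c,d)) = \bigl(a(a,c,d)(cd,a,(a,c,d))\bigr)\bigl(b(b,c,d)(cd,b,(b,c,d))\bigr),
\end{displaymath}
and the whole problem becomes one of simplifying this relation and solving it for $(ab,c,d)$.

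The simplification rests on several facts from the earlier lemmas: every associator above whose third argument is an associator is compounded, hence central by Proposition~\ref{Pr:Compounded}(ii); more generally any associator with an argument in $Z_2(Q)$ is central, because $Z_2(Q)/Z(Q) = Z(Q/Z(Q))$; any associator with two associator-valued arguments is trivial by Lemma~\ref{Lm:2A}; and $(ab,c,d)\equiv(a,c,d)(b,c,d)$ modulo $Z(Q)$ by Proposition~\ref{Pr:2} applied in $Q/Z(Q)$. Using these, together with commutativity and the rule $uv\x w = (u\x vw)(u,v,w)$ to reassociate (each reassociation error being split into the individual compounded associators of the statement by the local linearity of Lemma~\ref{Lm:Linear}), I would collect all the central associators on one side. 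After cancelling $ab$---legitimate since $(ab)\x s = (ab)\x t$ forces $s = t$ in a loop---this leaves $(ab,c,d) = (a,c,d)(b,c,d)\,W$, with $W\in Z(Q)$ assembled from $(cd,a,(a,c,d))$, $(cd,b,(b,c,d))$, the inverse of $(cd,ab,(ab,c,d))$, and the reassociation errors. It then remains to identify $W$: rewriting $(cd,a,(a,c,d))$ and $(cd,b,(b,c,d))$ by \eqref{Eq:13A} (after splitting off $c$ and $d$ by linearity), rewriting the error $(a,(a,c,d),b)$ by \eqref{Eq:MidA}, and expanding $(cd,ab,(ab,c,d))$ by linearity in each slot (legitimate because it is central and $(ab,c,d)\equiv(a,c,d)(b,c,d)$ modulo $Z(Q)$), nearly all terms should cancel in pairs, leaving precisely the six compounded associators in the first formula---the asymmetry between $((a,c,d),a,b)$ and $((b,c,d),b,a)$ coming from one last use of \eqref{Eq:13A}.

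The remaining two formulas I would obtain formally from the first. For $(a,b,cd)$, use \eqref{Eq:13} to write $(a,b,cd) = (cd,b,a)^{-1}$, expand $(cd,b,a)$ by the first formula, invert using the automorphic inverse property, and return each term to the required shape by \eqref{Eq:13} and linearity in the third slot (for instance $((c,b,a),c,d)^{-1} = ((a,b,c),c,d)$). For $(a,bc,d)$, use \eqref{Eq:123} to write $(a,bc,d) = (a,d,bc)(bc,a,d)$, expand $(a,d,bc)$ and $(bc,a,d)$ by the second and first formulas, collapse the leading pairs again by \eqref{Eq:123} (since $(a,d,b)(b,a,d) = (a,b,d)$ and $(a,d,c)(c,a,d) = (a,c,d)$), and reorganize the surviving central terms into the stated six by \eqref{Eq:13A}, \eqref{Eq:MidA} and Lemma~\ref{Lm:2A}.

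The main obstacle I anticipate is purely the bookkeeping in these reductions: a dozen or so central correction terms are in play at once, the identities \eqref{Eq:13A} and \eqref{Eq:MidA} act on different coordinates, and it is easy to drop or misplace an inverse. Nothing is conceptually delicate, however---every auxiliary term that arises is central and may be reordered at will, so the whole task reduces to deciding which terms survive the cancellations.
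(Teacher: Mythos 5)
Your proposal is correct and follows essentially the same route as the paper: the first identity via the automorphism $L_{c,d}$ and the expansion \eqref{Eq:LInn}, the second by applying \eqref{Eq:13} and the inversion rule $((a,b,c),d,e)^{-1}=((c,b,a),d,e)$ to the first, and the third via \eqref{Eq:123} combined with the first two. The bookkeeping you defer is exactly the content of the paper's displayed computations, and your outline of which central terms arise and cancel matches theirs.
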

\begin{proof}
By \eqref{Eq:LInn}, the identity $(ab)L_{c,d} = aL_{c,d}\x bL_{c,d}$ can be rewritten as
\begin{align*}
    ab\x (ab,c,d)(cd,ab,(ab,c,d)) &= a(a,c,d)(cd,a,(a,c,d))\x b(b,c,d)(cd,b,(b,c,d))\\
    &= (a(a,c,d)\x b(b,c,d))(cd,a,(a,c,d))(cd,b,(b,c,d)).
\end{align*}
By Lemma \ref{Lm:Linear} and \eqref{Eq:2A}, \eqref{Eq:13}, we have
\begin{align*}
    a(a,c,d)\x b(b,c,d) &= (a(a,c,d)\x b)(b,c,d)(a(a,c,d),b,(b,c,d))^{-1}\\
    &= (a(a,c,d)\x b)(b,c,d)(a,b,(b,c,d))^{-1}((a,c,d),b,(b,c,d))^{-1}\\
    &= (a(a,c,d)\x b)(b,c,d)((b,c,d),b,a)\\
    &= (ba\x (a,c,d))(b,c,d)(b,a,(a,c,d))^{-1}((b,c,d),b,a) \\
    &= (ba\x (a,c,d))(b,c,d)((a,c,d),a,b)(b,c,d),b,a).
\end{align*}
Since $(ab,c,d) = (a,c,d)(b,c,d)z$ for some $z\in Z(Q)$, Lemma \ref{Lm:Linear} yields
\begin{align*}
    (cd,ab,(ab,c,d)) &= (cd,ab,(a,c,d)(b,c,d)) = (cd,ab,(a,c,d))(cd,ab,(b,c,d))\\
    &= (cd,a,(a,c,d))(cd,b,(a,c,d))(cd,a,(b,c,d))(cd,b,(b,c,d)).
\end{align*}
Upon substituting and canceling $ab$ and like associators, the identity $(ab)L_{c,d} = aL_{c,d}\x bL_{c,d}$ therefore becomes
\begin{displaymath}
    (ab,c,d)(cd,b,(a,c,d))(cd,a,(b,c,d)) = (a,c,d)(b,c,d)((a,c,d),a,b)((b,c,d),b,a).
\end{displaymath}
The formula for $(ab,c,d)$ now follows by Lemma \ref{Lm:Linear} and \eqref{Eq:13}.

Note that Lemma \ref{Lm:Linear} and \eqref{Eq:13} imply $((a,b,c),d,e)^{-1} = ((a,b,c)^{-1},d,e) = ((c,b,a),d,e)$. This observation and \eqref{Eq:13} applied to the formula for $(ab,c,d)$ yield the formula for $(a,b,cd)$.

Using \eqref{Eq:123}, we calculate
\begin{align*}
    &(a,bc,d) = (a,d,bc)(bc,a,d) = (a,d,b)(a,d,c)\x (b,a,d)(c,a,d)\\
        &\x ((a,d,b),b,c)((a,d,c),c,b)((a,d,b),c,d)((a,d,c),b,d)((a,d,b),c,a)((a,d,c),b,a)\\
        &\x ((b,a,d),b,c)((c,a,d),c,b)((b,a,d),c,a)((c,a,d),b,a)((b,a,d),c,d)((c,a,d),b,d).
\end{align*}
The first four associators associate by \eqref{Eq:2A}, so $(a,d,b)(a,d,c)\x (b,a,d)(c,a,d) = (a,d,b)(b,a,d)\x (a,d,c)(c,a,d) = (a,b,d)(a,c,d)$. We can similarly pair the compounded associators, using Lemma \ref{Lm:Linear}. For instance, $((a,d,b),b,c)((b,a,d),b,c) = ((a,d,b)(b,a,d),b,c) = ((a,b,d),b,c)$. The formula for $(a,bc,d)$ follows.
\end{proof}

We can now deal with products in all arguments of a compounded associator. For products of the form $(ab,c,(d,e,f))$ we can use Lemma \ref{Lm:Linear} (or Proposition \ref{Pr:AProduct}), and for products $(a,b,(cd,e,f))$ we note that $(cd,e,f) = (c,e,f)(d,e,f)z$ for some central element $z$ (the explicit form of $z$ follows from Proposition \ref{Pr:AProduct}) and calculate $(a,b,(c,e,f)(d,e,f)) = (a,b,(c,e,f))(a,b,(d,e,f))$ by Lemma \ref{Lm:Linear}. From now on, we will use these and similar identities, often without explicit reference.

\section{Powers within associators}\label{Sc:Powers}

Using Proposition \ref{Pr:AProduct}, we proceed to derive formulae for powers within associators. Define $\alpha:\mathbb Z\to\mathbb Z$, $\beta:\mathbb Z\to\mathbb Z$ by
\begin{equation}\label{Eq:AlphaBeta}
    \alpha(n) = (n^3-n)/3,\quad \beta(n)=n^2-n.
\end{equation}
Note that $\alpha(0)=\beta(0)=0$, $\alpha(n+1) = \alpha(n)+n^2+n$, $\beta(n+1) = \beta(n)+2n$, $-\alpha(-n) = \alpha(n)$ and $2n^2-\beta(-n) = \beta(n)$ for every $n\in\mathbb Z$.

\begin{lemma}\label{Lm:Powers}
Let $Q$ be a commutative automorphic loop of nilpotency class $3$. Then
\begin{align}
    (a^n,b,c) &= (a,b,c)^n((a,b,c),a,a)^{\alpha(n)}((a,b,c),a,b)^{\beta(n)}((a,b,c),a,c)^{\beta(n)},\label{Eq:PL}\\
    (a,b^n,c) &= (a,b,c)^n((a,b,c),b,b)^{\alpha(n)}((a,b,c),b,a)^{\beta(n)}((a,b,c),b,c)^{\beta(n)},\label{Eq:PM}\\
    (a,b,c^n) &= (a,b,c)^n((a,b,c),c,c)^{\alpha(n)}((a,b,c),c,a)^{\beta(n)}((a,b,c),c,b)^{\beta(n)}\label{Eq:PR}
\end{align}
for every $a$, $b$, $c\in Q$ and every $n\in\mathbb Z$.
\end{lemma}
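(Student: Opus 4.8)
The plan is to prove all three identities \eqref{Eq:PL}, \eqref{Eq:PM}, \eqref{Eq:PR} by induction on $n\in\mathbb N$, and then to deduce the case of negative exponents using the identities $-\alpha(-n)=\alpha(n)$, $2n^2-\beta(-n)=\beta(n)$ recorded after \eqref{Eq:AlphaBeta} together with the AIP. By the symmetry relations \eqref{Eq:13} and \eqref{Eq:123} of Lemma \ref{Lm:SymA}, the three formulae are equivalent (rewriting $(a^n,b,c)$ first via \eqref{Eq:13} or \eqref{Eq:123} and then applying the appropriate one of the other two), so it suffices to establish, say, \eqref{Eq:PR} in full detail. I would therefore concentrate on the right-argument version and obtain the other two formally.

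For the induction step I would apply the third formula of Proposition \ref{Pr:AProduct} with the product $c\cdot c^n$ in the last slot, i.e. compute $(a,b,c^{n+1}) = (a,b,c\cdot c^n)$. Proposition \ref{Pr:AProduct} expresses this as $(a,b,c)(a,b,c^n)$ times a product of eight compounded associators of the form $((a,b,c),c,c^n)$, $((a,b,c^n),c^n,c)$, $((a,b,c),c^n,b)$, $((a,b,c^n),c,b)$, $((a,b,c),c^n,a)$, $((a,b,c^n),c,a)$ and similar. Since all compounded associators are central (nilpotency class $3$), each of these can be linearised in every slot by Lemma \ref{Lm:Linear}: replacing $c^n$ by $c$ inside a compounded associator costs a factor that is itself a triply-compounded associator, hence trivial by Lemma \ref{Lm:2A}. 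Thus modulo the inductive hypothesis $(a,b,c^n) = (a,b,c)^n((a,b,c),c,c)^{\alpha(n)}\cdots$, every correction term reduces to an integer power of one of $((a,b,c),c,c)$, $((a,b,c),c,a)$, $((a,b,c),c,b)$. Collecting exponents and using $\alpha(n+1)=\alpha(n)+n^2+n$ and $\beta(n+1)=\beta(n)+2n$ should make the bookkeeping close up.

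The main obstacle I expect is precisely this exponent bookkeeping: one has to track how the eight correction terms of Proposition \ref{Pr:AProduct}, after substituting $(a,b,c^n) = (a,b,c)^n\cdot(\text{central corrections})$ and re-linearising, contribute to the three exponents $\alpha$, $\beta$, $\beta$. For example, a term like $((a,b,c^n),c,b)$ becomes $((a,b,c),c,b)^n$ times higher corrections which vanish by Lemma \ref{Lm:2A}; a term like $((a,b,c),c,c^n)$ becomes $((a,b,c),c,c)^n$; and the ``diagonal'' contributions from pairing $((a,b,c),c,c^n)$ with $((a,b,c^n),c^n,c)$ are what supply the extra $n^2+n$ in $\alpha(n+1)$ and the extra $2n$ in $\beta(n+1)$. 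One must be careful that all the compounded associators genuinely commute with one another (they do, being central) so that the rearrangement into the stated normal form is legitimate, and one must double-check the base cases $n=0$ (all three identities reduce to $1=1$ since $\alpha(0)=\beta(0)=0$) and $n=1$ (trivial). Once \eqref{Eq:PR} is in hand for $n\ge 0$, the negative case follows by writing $(a,b,c^{-n})=(a,b,(c^n)^{-1}) = (a,b,c^n)^{-1}$ via the AIP applied in the third coordinate (valid since the associator is well-defined modulo the center and $c^n(c^n)^{-1}=1$), expanding, and simplifying with the sign identities for $\alpha$ and $\beta$; and \eqref{Eq:PL}, \eqref{Eq:PM} follow from \eqref{Eq:PR} by the symmetries of Lemma \ref{Lm:SymA}.
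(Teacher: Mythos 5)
Your treatment of nonnegative exponents is essentially the paper's argument: the paper inducts via the $(aa^n,b,c)$ case of Proposition \ref{Pr:AProduct} and declares the other two identities analogous, while you induct in the third slot via $(a,b,c\x c^n)$ and recover \eqref{Eq:PL}, \eqref{Eq:PM} from \eqref{Eq:PR} through \eqref{Eq:13} and \eqref{Eq:123}; that reduction is legitimate, the linearization of the compounded correction terms is sound (each is central, so Lemma \ref{Lm:Linear} applies and the higher corrections die by Lemma \ref{Lm:2A}), and the bookkeeping $\alpha(n+1)=\alpha(n)+n^2+n$, $\beta(n+1)=\beta(n)+2n$ closes as you describe.

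The negative-exponent step, however, contains a genuine error: the identity $(a,b,(c^n)^{-1})=(a,b,c^n)^{-1}$ is false in nilpotency class $3$. The AIP concerns $(ab)^{-1}=a^{-1}b^{-1}$ and says nothing about inverting one slot of an associator, and well-definedness modulo the center does not help because $(c^n)^{-1}$ is not $c^n$ times a central element. The correct move is to expand $1=(a,b,c^n\x(c^n)^{-1})$ by Proposition \ref{Pr:AProduct}: the $((a,b,\_),c^n,c^{-n})$-type terms cancel, but the remaining correction terms do not, and one gets
\begin{displaymath}
(a,b,(c^n)^{-1})=(a,b,c^n)^{-1}\,((a,b,c^n),c^n,a)^2\,((a,b,c^n),c^n,b)^2=(a,b,c^n)^{-1}((a,b,c),c,a)^{2n^2}((a,b,c),c,b)^{2n^2}.
\end{displaymath}
The factor $2n^2$ is exactly what converts $-\beta(n)$ into $\beta(-n)=2n^2-\beta(n)$; under your version the exponents of $((a,b,c),c,a)$ and $((a,b,c),c,b)$ would come out as $-\beta(n)$, which differs from $\beta(-n)$ for every $n\neq 0$. (Since $\alpha$ is odd, the $((a,b,c),c,c)$ exponent is unaffected, which makes the slip easy to overlook.) This is precisely how the paper handles $n<0$: it first derives $(a^{-1},b,c)=(a,b,c)^{-1}((a,b,c),a,b)^2((a,b,c),a,c)^2$ from $(aa^{-1},b,c)=1$ and only then substitutes $a^n$ for $a$.
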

\begin{proof}
We prove \eqref{Eq:PL}; the equations \eqref{Eq:PM}, \eqref{Eq:PR} are proven analogously. If $n=0$, \eqref{Eq:PL} holds. Suppose that \eqref{Eq:PL} holds for some $n\ge 0$. Note that $((a^i,b,c),a^j,d) = ((a,b,c)^i,a^j,d) = ((a,b,c),a,d)^{ij}$ for every $i$, $j\ge 0$, by Lemma \ref{Lm:Linear}, using our usual trick $(a^i,b,c) = (a,b,c)^iz$ for some $z\in Z(Q)$. By Proposition \ref{Pr:AProduct} we then have
\begin{align*}
    &(a^{n+1},b,c) =  (aa^n,b,c)\\
    &\phantom{x}= (a,b,c)(a^n,b,c)((a,b,c),a,a^n)((a^n,b,c),a^n,a)\\
    &\phantom{x}\phantom{=}\phantom{x}\x ((a,b,c),a^n,b)((a^n,b,c),a,b)((a,b,c),a^n,c)((a^n,b,c),a,c)\\
    &\phantom{x}= (a,b,c)(a^n,b,c)((a,b,c),a,a)^{n^2+n}((a,b,c),a,b)^{2n}((a,b,c),a,c)^{2n}\\
    &\phantom{x}= (a,b,c)^{n+1}((a,b,c),a,a)^{\alpha(n)+n^2+n}((a,b,c),a,b)^{\beta(n)+2n}((a,b,c),a,c)^{\beta(n)+2n}\\
    &\phantom{x}= (a,b,c)^{n+1}((a,b,c),a,a)^{\alpha(n+1)}((a,b,c),a,b)^{\beta(n+1)}((a,b,c),a,c)^{\beta(n+1)}.
\end{align*}
As for negative powers, first note that Proposition \ref{Pr:AProduct} gives
\begin{align*}
    1 =& (1,b,c) = (aa^{-1},b,c) = (a,b,c)(a^{-1},b,c)((a,b,c),a,a^{-1})((a^{-1},b,c),a^{-1},a)\\
    &\x ((a,b,c),a^{-1},b)((a^{-1},b,c),a,b)((a,b,c),a^{-1},c)((a^{-1},b,c),a,c)\\
    =&(a,b,c)(a^{-1},b,c)((a,b,c),a,b)^{-2}((a,b,c),a,c)^{-2}.
\end{align*}
Since associators associate with one another by \eqref{Eq:2A}, we deduce
\begin{displaymath}
    (a^{-1},b,c) = (a,b,c)^{-1}((a,b,c),a,b)^2((a,b,c),a,c)^2.
\end{displaymath}
Then for every $n>0$ we have
\begin{align*}
    &(a^{-n},b,c) = ((a^n)^{-1},b,c) = (a^n,b,c)^{-1}((a^n,b,c),a^n,b)^2((a^n,b,c),a^n,c)^2\\
    &\phantom{x}=(a,b,c)^{-n}((a,b,c),a,a)^{-\alpha(n)}((a,b,c),a,b)^{2n^2-\beta(n)}((a,b,c),a,c)^{2n^2-\beta(n)}\\
    &\phantom{x}=(a,b,c)^{-n}((a,b,c),a,a)^{\alpha(-n)}((a,b,c),a,b)^{\beta(-n)}((a,b,c),a,c)^{\beta(-n)},
\end{align*}
finishing the proof of \eqref{Eq:PL}.
\end{proof}

\begin{lemma}\label{Lm:AllPowers}
Let $Q$ be a commutative automorphic loop of nilpotency class $3$. Then
\begin{align*}
    (a^i,b^j,c^k) =& (a,b,c)^{ijk}((a,b,c),a,a)^{\alpha(i)jk}((a,b,c),a,b)^{\beta(i)j^2k}((a,b,c),a,c)^{\beta(i)jk^2}\\
    &\x ((a,b,c),b,a)^{i\beta(j)k}((a,b,c),b,b)^{i\alpha(j)k}((a,b,c),b,c)^{i\beta(j)k^2}\\
    &\x ((a,b,c),c,a)^{ij\beta(k)}((a,b,c),c,b)^{ij\beta(k)}((a,b,c),c,c)^{ij\alpha(k)}
\end{align*}
for every $a$, $b$, $c\in Q$ and $i$, $j$, $k\in\mathbb Z$.
\end{lemma}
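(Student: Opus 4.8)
The plan is to reduce the three-variable power formula to the three one-variable formulae of Lemma \ref{Lm:Powers} by applying them one coordinate at a time, and then to check that the exponents combine correctly. First I would start from $(a,b,c)$ and raise the first argument to the $i$-th power using \eqref{Eq:PL}, obtaining
\begin{displaymath}
    (a^i,b,c) = (a,b,c)^i((a,b,c),a,a)^{\alpha(i)}((a,b,c),a,b)^{\beta(i)}((a,b,c),a,c)^{\beta(i)}.
\end{displaymath}
Next I would raise the second argument to the $j$-th power. Because all the compounded associators appearing above are central (nilpotency class $3$), and because $(a^i,b,c)=(a,b,c)^i z$ for some $z\in Z(Q)$, Lemma \ref{Lm:Linear}(ii) lets me distribute $(\_,b^j,\_)$ over the product, and then \eqref{Eq:PM} applies to $(a,b,c)$ while the doubly-compounded terms like $((a,b,c),a,a)$ vanish when their middle slot is raised to a power (they are already central, so the relevant associators are trivial by Lemma \ref{Lm:2A}). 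This produces
\begin{displaymath}
    (a^i,b^j,c) = (a,b,c)^{ij}((a,b,c),a,a)^{\alpha(i)j}((a,b,c),a,b)^{\beta(i)j^2}((a,b,c),a,c)^{\beta(i)j}((a,b,c),b,a)^{i\beta(j)}((a,b,c),b,b)^{i\alpha(j)}((a,b,c),b,c)^{i\beta(j)},
\end{displaymath}
where the exponent on $((a,b,c),a,b)$ picks up the extra factor of $j$ from \eqref{Eq:PM} applied with middle argument $b^j$, landing on $j\cdot j=j^2$, while $((a,b,c),a,c)$ only sees $(\_,b^j,\_)$ linearly and stays at $\beta(i)j$.

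Then I would raise the third argument to the $k$-th power, again using centrality plus Lemma \ref{Lm:Linear}(iii) to distribute $(\_,\_,c^k)$ over the whole product, applying \eqref{Eq:PR} to $(a,b,c)$ and linearity (trivially) to the doubly-compounded terms. This introduces a factor $k$ on every surviving term via the main power, an extra factor $k$ on any term already carrying a $c$ in its last slot (namely $((a,b,c),a,c)$ and $((a,b,c),b,c)$, pushing them to $k^2$), and the new terms $((a,b,c),c,a)^{ij\beta(k)}$, $((a,b,c),c,b)^{ij\beta(k)}$, $((a,b,c),c,c)^{ij\alpha(k)}$ coming from \eqref{Eq:PR} with last argument $c^k$. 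Collecting exponents gives exactly the claimed formula. The bookkeeping is the only real work: I expect the main obstacle to be making sure that when \eqref{Eq:PM} or \eqref{Eq:PR} is applied to a term of the form $(a,b,c)^m$ the resulting doubly-compounded correction terms $((a,b,c),b,\cdot)^{m\cdot(\dots)}$ get their multiplicities right (each such term is central, so it contributes cleanly, but one must track the accumulated exponent $m$ correctly), and symmetrically that the one-variable formulae are being applied with the correct "base" associator in each slot. I would verify the three boundary cases $j=0$, $k=0$ and the already-established $i$-only case as consistency checks, and confirm the formula is symmetric under the obvious simultaneous permutations of $(a,i)$, $(b,j)$, $(c,k)$ together with the symmetry relations \eqref{Eq:13} and \eqref{Eq:123}, which it visibly is once one also uses that all the doubly-compounded associators commute and associate by Lemma \ref{Lm:2A}.
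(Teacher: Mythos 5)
There is a genuine gap, and it sits exactly where you predicted: in the bookkeeping. Your inside-out order --- first form $(a^i,b,c)$, then pass to $(a^i,b^j,c)$ by applying \eqref{Eq:PM} with first argument $a^i$, then to $(a^i,b^j,c^k)$ by \eqref{Eq:PR} with first two arguments $a^i$, $b^j$ --- does not produce the exponents in the statement. Applying \eqref{Eq:PM} to $(a^i,b^j,c)$ yields the new correction $((a^i,b,c),b,a^i)^{\beta(j)}=((a,b,c),b,a)^{i^2\beta(j)}$, while the already-present term $((a,b,c),a,b)^{\beta(i)}$ is merely raised to the $j$-th power inside $(a^i,b,c)^j$. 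So your procedure, carried out honestly, gives $((a,b,c),a,b)^{\beta(i)jk}$ and $((a,b,c),b,a)^{i^2\beta(j)k}$ (and analogously $((a,b,c),a,c)^{\beta(i)jk}$, $((a,b,c),b,c)^{i\beta(j)k}$, $((a,b,c),c,a)^{i^2j\beta(k)}$, $((a,b,c),c,b)^{ij^2\beta(k)}$), not the stated $\beta(i)j^2k$, $i\beta(j)k$, etc. Your explanation that $((a,b,c),a,b)$ ``picks up the extra factor of $j$ from \eqref{Eq:PM}'' is not a step your procedure performs: \eqref{Eq:PM} only contributes new terms with $b$ in the middle slot. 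The expression your order produces is a correct identity, but it differs from the stated one by factors such as $\bigl(((a,b,c),b,a)((a,b,c),a,b)^{-1}\bigr)^{\beta(i)\beta(j)k}$, and the symmetry $((a,b,c),d,e)=((a,b,c),e,d)$ that would kill these is not available here --- only its two-generator special cases are proved, and only later, in Lemma \ref{Lm:Reduction}. For the same reason your closing consistency check is unreliable: the stated formula is not symmetric under permuting $(a,i)$, $(b,j)$, $(c,k)$ unless one already identifies $((a,b,c),d,e)$ with $((a,b,c),e,d)$.

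The repair is to expand from the outside in, which is what the paper does. Write $(a^i,b^j,c^k)$ via \eqref{Eq:PL} applied to the triple $(a,b^j,c^k)$, so that the correction $((a,b^j,c^k),a,b^j)^{\beta(i)}$ linearizes to $((a,b,c),a,b)^{\beta(i)j^2k}$ (one factor $j$ and one factor $k$ from the first slot $(a,b^j,c^k)=(a,b,c)^{jk}z$, and a second factor $j$ from the third slot $b^j$). Then expand $(a,b^j,c^k)^i$ via \eqref{Eq:PM} --- the first argument is now $a$, not $a^i$, which is why $((a,b,c^k),b,a)^{\beta(j)}$ contributes $i\beta(j)k$ rather than $i^2\beta(j)k$ after raising to the $i$-th power. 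Finally expand $(a,b,c^k)^{ij}$ via \eqref{Eq:PR} with plain $a$ and $b$ in the correction terms, giving the uniform exponents $ij\beta(k)$, $ij\beta(k)$, $ij\alpha(k)$.
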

\begin{proof}
By Lemmas \ref{Lm:Linear}, \ref{Lm:Powers} and Proposition \ref{Pr:AProduct}, $(a^i,b^j,c^k)$ is equal to
\begin{multline*}
    (a,b^j,c^k)^i((a,b^j,c^k),a,a)^{\alpha(i)}((a,b^j,c^k),a,b^j)^{\beta(i)}((a,b^j,c^k)a,c^k)^{\beta(i)}\\
    =(a,b^j,c^k)^i((a,b,c),a,a)^{\alpha(i)jk}((a,b,c),a,b))^{\beta(i)j^2k}((a,b,c),a,c)^{\beta(i)jk^2},
\end{multline*}
the term $(a,b^j,c^k)^i$ is equal to
\begin{multline*}
    [(a,b,c^k)^j((a,b,c^k),b,b)^{\alpha(j)}((a,b,c^k),b,a)^{\beta(j)}((a,b,c^k),b,c^k)^{\beta(j)}]^i\\
    = (a,b,c^k)^{ij}((a,b,c),b,b)^{i\alpha(j)k}((a,b,c),b,a)^{i\beta(j)k}((a,b,c),b,c)^{i\beta(j)k^2},
\end{multline*}
and the term $(a,b,c^k)^{ij}$ is equal to
\begin{multline*}
    [(a,b,c)^k ((a,b,c),c,c)^{\alpha(k)}((a,b,c),c,a)^{\beta(k)}((a,b,c),c,b)^{\beta(k)}]^{ij}\\
    = (a,b,c)^{ijk}((a,b,c),c,c)^{ij\alpha(k)}((a,b,c),c,a)^{ij\beta(k)}((a,b,c),c,b)^{ij\beta(k)}.
\end{multline*}
\end{proof}

\section{Reduction}\label{Sc:Reduction}

Since we ultimately want to describe the free loop $F_3(x,y)$, we will from now on start focusing on formulae that involve only two variables $x$, $y$. For fixed elements $x$, $y$ (not necessarily the generators of $F_3(x,y)$), let
\begin{align*}
    u_1 &= (x,x,y),\quad &u_2&=(x,y,y),\\
    z_1 &= (x,x,u_1),\quad &z_2&=(x,x,u_2),\quad &z_3&=(x,y,u_1),\quad &z_4&=(x,y,u_2),\\
    z_5 &= (y,x,u_1),\quad &z_6&=(y,x,u_2),\quad &z_7&=(y,y,u_1),\quad &z_8&=(y,y,u_2).
\end{align*}
No additional associators will be needed since $(x,y,x)=(y,x,y)=1$ by \eqref{Eq:Flex}, compounded associators of the form $((\_,\_,\_),\_,\_)$ and $(\_,(\_,\_,\_),\_)$ can be rewritten as products of compounded associators of the form $(\_,\_,(\_,\_,\_))$ by \eqref{Eq:13A} and \eqref{Eq:MidA}, compounded associators with associators in two components vanish by \eqref{Eq:2A}, and products within associators can be handled by Proposition \ref{Pr:AProduct}.

The reader might have noticed that in our product formulas (such as in Proposition \ref{Pr:AProduct}) we accumulate associators on the left, but we chose the canonical compounded associators with accumulated associators on the right. It is easy to convert between these two formats, since
\begin{displaymath}
    (a,b,(c,d,e)) = ((c,d,e),b,a)^{-1} = ((e,d,c)^{-1},b,a)^{-1} = ((e,d,c),b,a),
\end{displaymath}
by \eqref{Eq:13} and Lemma \ref{Lm:Linear}. Also note that
\begin{displaymath}
    (a,b,(c,d,e)) = (a,b,(e,d,c)^{-1}) = (a,b,(e,d,c))^{-1}
\end{displaymath}
thanks to \eqref{Eq:13} and Lemma \ref{Lm:Linear}.

\begin{lemma}\label{Lm:Reduction}
Let $Q$ be a commutative automorphic loop of nilpotency class $3$. Then for every $x$, $y\in Q$ we have $z_2=z_3=z_5$ and $z_4=z_6=z_7$.
\end{lemma}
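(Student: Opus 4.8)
The plan is to exploit the symmetry identities \eqref{Eq:13}, \eqref{Eq:123} of Lemma \ref{Lm:SymA} together with the fact that compounded associators are central (so Lemma \ref{Lm:Linear} applies freely) to express each of $z_2,z_3,z_4,z_5,z_6,z_7$ in terms of the associators $u_1=(x,x,y)$, $u_2=(x,y,y)$ and the basic compounded associators, and then read off the claimed coincidences. The key structural input is that $u_2=(x,y,y)$ is closely tied to $u_1=(x,x,y)$ via \eqref{Eq:123}: applying $(a,b,c)=(a,c,b)(b,a,c)$ to suitable permutations of $x$, $x$, $y$ should give a relation of the form $u_1 u_2 = (\text{something central and controllable})$, or more precisely an expression for one of $(x,y,x)$-type or $(y,x,x)$-type associators; combined with flexibility $(x,y,x)=1$ this yields $(y,x,x)=u_1^{-1}$ modulo $Z(Q)$ and similarly $(y,y,x)=u_2^{-1}$, which will be needed when we feed arguments like $u_1$ or $u_2$ into the inner slots.

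The main computation I would carry out is the following. Start from \eqref{Eq:123} with a suitable substitution so that one of the three resulting associators is a compounded associator we want to identify. For instance, to get at $z_3=(x,y,u_1)$, apply \eqref{Eq:123} to the triple $(x,y,u_1)$ itself: $(x,y,u_1)=(x,u_1,y)(y,x,u_1)$, i.e. $z_3=(x,u_1,y)\,z_5$. Then \eqref{Eq:MidA} rewrites $(x,u_1,y)=(x,y,u_1)\big((u_1,x,y)\big)$... more carefully, $(a,(b,c,d),e)=(a,e,(b,c,d))((b,c,d),a,e)$ gives $(x,u_1,y)=(x,y,u_1)((u_1 \text{-slot}))$ — wait, that requires $u_1$ in the \emph{middle}; since $u_1=(x,x,y)$ is a genuine associator, \eqref{Eq:MidA} with $a=x$, $(b,c,d)=u_1$, $e=y$ gives $(x,u_1,y)=(x,y,u_1)\cdot((u_1),x,y)= z_3 \cdot ((x,x,y),x,y)$, and $((x,x,y),x,y)=(y,x,(x,x,y))^{-1}=(y,x,u_1)^{-1}=z_5^{-1}$ by \eqref{Eq:13A}. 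Substituting back into $z_3=(x,u_1,y)z_5$ gives $z_3=z_3 z_5^{-1} z_5=z_3$, a tautology — so this naive route collapses and I must instead genuinely expand the \emph{outer} arguments. The productive approach: write $u_1=(x,x,y)$ and use Lemma \ref{Lm:Linear} to pull the first $x$ out, i.e. apply the $(ab,c,d)$-type identity of Lemma \ref{Lm:Linear}(i)–(iii) to things like $(x,x,u_1)$ where one $x$ in the outer triple interacts with the $x$'s hidden inside $u_1$; concretely, compute $(x,x,u_1)$ by first using \eqref{Eq:13}/\eqref{Eq:123} to move $u_1$ around and exposing an associator of the form $(x,x,(x,x,y))$ versus $(x,y,(x,x,y))$, and track how replacing $y$ by $(x,x,y)$ in the level-2 formulas from Theorem~[BaGrVo 2.3]/Proposition \ref{Pr:2} produces these identifications.

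A cleaner path I would actually pursue: since $u_1,u_2\in A(Q)$ and all compounded associators are central, the map sending a triple of inner arguments to its compounded associator is, by Lemma \ref{Lm:Linear} and Proposition \ref{Pr:AProduct}, essentially \emph{linear} in each slot modulo higher terms that vanish here (no room past class $3$). So I would set up the $2$-variable ``linearized'' picture: record $(a,b,c)$ for $a,b,c\in\{x,y\}$ modulo $Z(Q)$, note $u_1,u_2$ span the relevant quotient with $(y,x,x)\equiv -u_1$, $(y,y,x)\equiv -u_2$ in additive notation, and then $z_i=(\epsilon_i,\delta_i,u_{?})$ becomes a bilinear pairing $\langle (\epsilon_i,\delta_i),\, u_?\rangle$ where the second argument is itself a linear combination $u_1\leftrightarrow(x,x,y)$, $u_2\leftrightarrow(x,y,y)$. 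The identity $u_1+u_2 \equiv$ (a combination fixed by flexibility) forces $(x,x,y)$ and $(x,y,y)$ to pair identically against certain inner pairs. Expanding $z_2=(x,x,u_2)=(x,x,(x,y,y))$ and $z_3=(x,y,u_1)=(x,y,(x,x,y))$ and $z_5=(y,x,u_1)=(y,x,(x,x,y))$ via this linearity and \eqref{Eq:123}, the symmetry $(x,y,w)=(x,w,y)(y,x,w)$ with $w\in\{u_1,u_2\}$ interchanges the roles and produces the two orbits $\{z_2,z_3,z_5\}$ and $\{z_4,z_6,z_7\}$.

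\textbf{Main obstacle.} The delicate point is bookkeeping: \eqref{Eq:123} and \eqref{Eq:MidA} introduce compounded associators with the level-$2$ associator in the \emph{middle} slot, and one must repeatedly convert these (via \eqref{Eq:13A}, \eqref{Eq:MidA}) to the canonical right-slot form without sign or indexing errors, while simultaneously using Proposition \ref{Pr:AProduct} to expand any outer product that appears when $u_1$ or $u_2$ is itself unfolded as $(x,x,y)$ or $(x,y,y)$. I expect that the cleanest derivation writes everything additively in the free abelian group on the formal symbols $z_1,\dots,z_8$ and then checks that the linear relations coming from \eqref{Eq:13}, \eqref{Eq:123} applied to all relevant substitutions of $\{x,y\}$ into the five-fold associator $(\,\cdot\,,\cdot\,,(\cdot\,,\cdot\,,\cdot\,))$ collapse exactly to $z_2=z_3=z_5$ and $z_4=z_6=z_7$ — verifying that these, and no further, relations are forced is the real content, and is best done by a short finite case check over $\{x,y\}^5$.
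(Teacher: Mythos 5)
Your proposal has a genuine gap, and in fact you half-discovered it yourself: when you applied \eqref{Eq:123} and \eqref{Eq:MidA} to $(x,y,u_1)$ you got a tautology, and the same thing happens for \emph{every} substitution of $\{x,y\}$ into the five-fold associator. The identities \eqref{Eq:Flex}, \eqref{Eq:13}, \eqref{Eq:123}, \eqref{Eq:13A}, \eqref{Eq:MidA}, \eqref{Eq:2A} only convert between the left-slot, middle-slot and right-slot forms of a compounded associator; they never relate two distinct $z_i$'s. Your proposed ``finite case check over $\{x,y\}^5$'' would therefore return no relations at all. The linearized/bilinear-pairing picture in your third paragraph is the same route in different clothing, and the relation you hope to lean on there does not exist: \eqref{Eq:123} applied to permutations of $x,x,y$ gives only $(y,x,x)=u_1^{-1}$ (already known from \eqref{Eq:13}), and there is no identity tying $u_1$ to $u_2$ --- indeed $u_1$ and $u_2$ are independent in $F_3(x,y)$, so nothing ``forces $(x,x,y)$ and $(x,y,y)$ to pair identically'' against anything.

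The missing idea is that the nontrivial relations come from evaluating identities on \emph{products} of the generators, not on the generators themselves. The paper gets $z_3=z_5$ by expanding the flexible law $(xy,x,xy)=1$ with Proposition \ref{Pr:AProduct} (note that $xy$ appears in both the first and third slots, so after full linearization the cross terms produce compounded associators), and it gets $z_2=z_3$ by writing out the automorphism condition $(y\x xy)L_{x,x}=yL_{x,x}\x (xy)L_{x,x}$ using \eqref{Eq:LInn} and Lemma \ref{Lm:Powers}; combining the two gives $z_2=z_3=z_5$, and interchanging $x$ and $y$ gives $z_4=z_6=z_7$. Neither of these inputs --- flexibility of a product, or an inner mapping applied to a product --- appears anywhere in your plan, so as written the argument cannot close.
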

\begin{proof}
Focusing first on the product in the third coordinate, we calculate, starting with \eqref{Eq:Flex}:
\begin{align*}
    1=&(xy,x,xy) = (xy,x,x)(xy,x,y)((xy,x,x),x,y)((xy,x,y),y,x)\\
    &\x ((xy,x,x),y,x)((xy,x,y),x,x))((xy,x,x),y,xy)((xy,x,y),x,xy)\\
    =&(xy,x,x)(xy,x,y)z_5z_3^{-1}z_3z_1^{-1}z_3z_7z_1^{-1}z_5^{-1} =(xy,x,x)(xy,x,y)z_1^{-2}z_3z_7\\
    =&(y,x,x)((y,x,x),y,x)((y,x,x),x,x)((y,x,x),x,x)\\
    &\x (x,x,y)((x,x,y),x,y)((x,x,y),y,x)((x,x,y),y,y)z_1^{-2}z_3z_7\\
    =&(y,x,x)(x,x,y)z_3z_1z_1z_5^{-1}z_3^{-1}z_7^{-1}z_1^{-2}z_3z_7 = 1\x z_3z_5^{-1}.
\end{align*}
Hence $z_3=z_5$, or $(x,y,(x,x,y)) = (y,x,(x,x,y))$. Interchanging $x$ and $y$ in this identity yields $(y,x,(y,y,x)) = (x,y,(y,y,x))$, which is equivalent to $z_6=z_4$.

In the following calculation we will also use \eqref{Eq:LInn} and Lemma \ref{Lm:Powers}. As $Q$ is automorphic, $yL_{x,x}\x (xy)L_{x,x} = (y\x xy)L_{x,x}$. On the left hand side of this identity we have
\begin{displaymath}
    yL_{x,x} = y(y,x,x)(x^2,y,(y,x,x)) = y(y,x,x)z_3^{-2}
\end{displaymath}
and
\begin{align*}
    (xy)L_{x,x}&= (xy)(xy,x,x)(x^2,xy,(xy,x,x)) = (xy)(xy,x,x)(x^2,xy,(y,x,x)) \\
        &= (xy)(y,x,x)((y,x,x),y,x)((y,x,x),x,x)^2z_1^{-2}z_3^{-2}\\
        &=(xy)(y,x,x)z_3z_1^2z_1^{-2}z_3^{-2} = (xy)(y,x,x)z_3^{-1},
\end{align*}
while on the right hand side we have
\begin{align*}
    (y\x xy)L_{x,x} &= (y\x xy)(y\x xy,x,x)(x^2,y\x xy,(y\x xy,x,x))\\
    &=(y\x xy)(y^2x\x (y,y,x)^{-1},x,x)(x^2,y^2x,(y^2,x,x))\\
    &=(y\x xy)\x (y^2x,x,x)((x,y,y),x,x)z_3^{-8}z_1^{-4}\\
    &=(xy^2\x (x,y,y))\x (xy^2,x,x)z_2^{-1}z_3^{-8}z_1^{-4}\\
    &=(xy^2\x (x,y,y))\x (y^2,x,x)((y^2,x,x),y^2,x)((y^2,x,x),x,x)^2z_2^{-1}z_3^{-8}z_1^{-4}\\
    &=(xy^2\x (x,y,y))(y,x,x)^2((y,x,x),y,y)^2((y,x,x),y,x)^4z_3^4z_1^4z_2^{-1}z_3^{-8}z_1^{-4}\\
    &=xy^2\x (x,y,y)(y,x,x)^2z_7^2z_3^4z_3^{-4}z_2^{-1} = xy^2\x (x,y,y)(y,x,x)^2 z_7^2z_2^{-1}.
\end{align*}
Returning to the left hand side, we rewrite it as
\begin{align*}
    &y(y,x,x)\x (xy)(y,x,x)z_3^{-3} = (y(y,x,x)\x xy)(y,x,x)(y(y,x,x),xy,(y,x,x))^{-1}z_3^{-3}\\
    &\phantom{x}=(xy\x y(y,x,x))(y,x,x)(y,xy,(y,x,x))^{-1}z_3^{-3}\\
    &\phantom{x}=(xy\x y)(y,x,x)(xy,y,(y,x,x))^{-1}\x (y,x,x)z_5z_7z_3^{-3}\\
    &\phantom{x}=(xy\x y)(y,x,x)^2z_3z_7z_5z_7z_3^{-3}\\
    &\phantom{x}=(xy^2)(x,y,y)\x (y,x,x)^2z_3^{-2}z_5z_7^2 = xy^2\x (x,y,y)(y,x,x)^2z_3^{-2}z_5z_7^2.
\end{align*}
Comparing the two sides now yields $z_2^{-1}=z_3^{-2}z_5$. But $z_3^{-2}z_5 = z_3^{-1}$ by the first part of this lemma, and hence $z_2=z_3$. Switching $x$ and $y$ in the identity $z_2=z_3$ gives $z_7=z_6$.
\end{proof}

With the reduction of Lemma \ref{Lm:Reduction} in mind, we set for any fixed $x$, $y$
\begin{align*}
    u_1 &= (x,x,y),\quad &u_2 &= (x,y,y),\quad &v_1 &= (x,x,u_1),\\
    v_2 &= (x,x,u_2),\quad &v_3 &= (y,y,u_1),\quad &v_4&=(y,y,u_2).
\end{align*}
We are now ready to describe canonical elements of the free loop $F_3(x,y)$.

\begin{lemma}\label{Lm:Canonical}
Every element of $F_3(x,y)$ can be written in the canonical form
\begin{displaymath}
    (x^{a_1}y^{a_2}\x u_1^{a_3}u_2^{a_4}) v_1^{a_5}v_2^{a_6}v_3^{a_7}v_4^{a_8},
\end{displaymath}
where $a_i\in\mathbb Z$.
\end{lemma}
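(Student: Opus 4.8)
The plan is to show that the set $S$ of elements of $F_3(x,y)$ admitting an expression of the given canonical form is closed under multiplication (and inverses); since $S$ contains the generators $x$, $y$ and $F_3(x,y)$ is generated by $x$ and $y$, this forces $S = F_3(x,y)$. The essential point is that every associator appearing when we multiply two canonical elements can be pushed, via the machinery of Sections \ref{Sc:Symmetry}--\ref{Sc:Reduction}, into the subgroup generated by $v_1,\dots,v_4$, which is central (the $v_i$ are compounded associators, hence central by nilpotency class $3$) and hence behaves like a free abelian group sitting inside the nucleus.

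Concretely, I would first record that $\langle v_1,v_2,v_3,v_4\rangle \le Z(F_3(x,y))$, so multiplying canonical elements reduces to understanding a product of the form $(x^{a_1}y^{a_2}\x u_1^{a_3}u_2^{a_4})(x^{b_1}y^{b_2}\x u_1^{b_3}u_2^{b_4})$ modulo the central part. Expanding this product and collecting terms requires moving the $x$'s and $y$'s past the $u_i$'s and past each other; each such move generates an associator. By Lemma \ref{Lm:Canonical}'s predecessors---Lemma \ref{Lm:Symmetry} (to rewrite $((\_,\_,\_),\_,\_)$ and $(\_,(\_,\_,\_),\_)$ in terms of $(\_,\_,(\_,\_,\_))$), Lemma \ref{Lm:2A} (to kill doubly compounded associators), Lemma \ref{Lm:AllPowers} (to handle powers inside associators), Proposition \ref{Pr:AProduct} (to handle products inside associators), and finally Lemma \ref{Lm:Reduction} together with the flexibility law $(x,y,x)=(y,x,y)=1$ (to collapse $z_1,\dots,z_8$ down to $v_1,\dots,v_4$)---every associator that arises is a product of powers of $v_1,v_2,v_3,v_4$. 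Likewise $(a,b,u_i)$ for $a,b\in\{x,y\}$ reduces via Lemma \ref{Lm:Reduction} to a product of $v_j$'s, and $u_i$ themselves multiply among $x,y$ producing only further $v_j$'s. Hence the full product again has the canonical shape, proving closure under multiplication. Closure under inverses follows from the AIP and the centrality of the $v_i$: $(x^{a_1}y^{a_2}\x u_1^{a_3}u_2^{a_4})^{-1}$ has AIP-inverse $x^{-a_1}y^{-a_2}\x u_1^{-a_3}u_2^{-a_4}$ up to central correction terms that are again words in the $v_i$.

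The main obstacle is purely bookkeeping: one must be confident that \emph{no} associator outside $\langle u_1,u_2,v_1,\dots,v_4\rangle$ can appear. This is guaranteed by the remark opening Section \ref{Sc:Reduction}: the only noncentral associators of nilpotency class $\le 2$ built from $x,y$ are $u_1 = (x,x,y)$, $u_2 = (x,y,y)$ (since $(x,y,x)=(y,x,y)=1$), and every compounded associator either has two associator entries and dies by \eqref{Eq:2A}, or can be brought by \eqref{Eq:13A}, \eqref{Eq:MidA}, Proposition \ref{Pr:AProduct} and Lemma \ref{Lm:AllPowers} to an integer combination of $z_1,\dots,z_8$, which Lemma \ref{Lm:Reduction} identifies with integer combinations of $v_1,\dots,v_4$. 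So at this stage we only prove \emph{existence} of the canonical form; uniqueness of the exponents $a_1,\dots,a_8$ and the explicit multiplication formula are deferred (uniqueness will ultimately follow once the model loop $Q$ of Lemma \ref{Lm:FullMult} is verified, since a surjection $Q \to F_3(x,y)$ onto a loop of the same size must be an isomorphism). I would therefore phrase the proof as a short induction on the word length of an element of $F_3(x,y)$ in the generators $x,y$: the generators are canonical, and the closure argument above shows that multiplying a canonical element by $x^{\pm1}$ or $y^{\pm1}$ yields a canonical element.
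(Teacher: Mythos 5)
Your proposal is correct and follows essentially the same route as the paper: reduce every associator to a word in $u_1$, $u_2$ and the central compounded associators $v_1,\dots,v_4$ via Lemmas \ref{Lm:Linear}, \ref{Lm:Symmetry}, \ref{Lm:2A}, \ref{Lm:AllPowers}, Proposition \ref{Pr:AProduct} and Lemma \ref{Lm:Reduction}, then check that the set of canonical words is closed under multiplication, deferring uniqueness to the main theorem. The only cosmetic difference is that you frame the final step as induction on word length in $x^{\pm1}$, $y^{\pm1}$, whereas the paper verifies closure under products of two arbitrary canonical words (which is what one actually needs, since elements of a nonassociative subloop need not be left-normed words in the generators); your second paragraph already contains that general closure argument.
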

\begin{proof}
Let $X=\{x,x^{-1},y,y^{-1}\}$. We first note that any associator can be written as $u_1^{b_1}u_2^{b_2}\prod v_i^{c_i}$.
Indeed, since $F_3(x,y)$ has nilpotency class three, no compounded associators appear within associators. Using Lemma \ref{Lm:Linear}, Proposition \ref{Pr:AProduct} and their consequences, every associator can be written as a product of compounded associators and ordinary associators with all variables in $X$. In fact, equations \eqref{Eq:2A}, \eqref{Eq:Flex}, \eqref{Eq:13A} and \eqref{Eq:MidA} imply that every associator is a product of $u_1$, $u_2$, the $z_i$s and their inverses. Since associators associate among themselves by \eqref{Eq:2A}, this product is of the form $u_1^{b_1}u_2^{b_2}\prod z_i^{d_i}$ for suitable exponents in $\mathbb Z$, and hence of the form $u_1^{b_1}u_2^{b_2}\prod v_i^{c_i}$ by Lemma \ref{Lm:Reduction}.

To establish the lemma, it suffices to show that a product of two canonical words is also canonical. First, $[x^{a_1}y^{a_2}\x u_1^{a_3}u_2^{a_4}\prod v_i^{c_i}]\x[x^{b_1}y^{b_2}\x u_1^{b_3}u_2^{b_4}\prod v_i^{d_i}] = (x^{a_1}y^{a_2}\x u_1^{a_3}u_2^{a_4})(x^{b_1}y^{b_2}\x u_1^{b_3}u_2^{b_4})\prod v_i^{c_i+d_i}$, so it suffices to show that the product $(x^{a_1}y^{a_2}\x u_1^{a_3}u_2^{a_4})(x^{b_1}y^{b_2}\x u_1^{b_3}u_2^{b_4})$ has the desired form. We can rewrite this word as $((x^{a_1}y^{a_2}\x x^{b_1}y^{b_2})\x u_1^{a_3}u_2^{a_4})\x u_1^{b_3}u_2^{b_4}w$ with some product $w$ of compounded associators, and further to $(x^{a_1}y^{a_2}\x x^{b_1}y^{b_2})\x u_1^{a_3+b_3}u_2^{a_4+b_4}w$, using Lemma \ref{Lm:Linear} and \eqref{Eq:2A}. Now, $x^{a_1}y^{a_2}\x x^{b_1}y^{b_2}$ can be written as $(\cdots((x^{a_1+b_1}y^{a_2+b_2})t_1)t_2\cdots)t_k$, where each $t_i$ is an associator. Using Lemma \ref{Lm:Linear} and \eqref{Eq:2A} again, we further rewrite this as $(x^{a_1+b_1}y^{a_2+b_2})(t_1t_2\cdots t_k)$. The rest is easy.
\end{proof}

\section{The Main result}\label{Sc:Main}

The calculation described in the proof of Lemma \ref{Lm:Reduction} is straightforward but rather tedious. Before we attempt it, we note:

\begin{lemma}\label{Lm:MidA}
In the loop $F_3(x,y)$ all associators are in the middle nucleus.
\end{lemma}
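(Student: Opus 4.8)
The statement to prove is that every associator of $F_3(x,y)$ lies in the middle nucleus $N_\mu$, i.e. $(a,(b,c,d),e)=1$ for all $a$, $b$, $c$, $d$, $e\in F_3(x,y)$. The plan is to reduce this to a finite check on the canonical generators of the associator subloop. First I would invoke Lemma \ref{Lm:Canonical}: an arbitrary associator of $F_3(x,y)$ is central (being compounded with three generators at least in the middle position would be compounded), and in fact every ordinary associator $(b,c,d)$ with $b$, $c$, $d\in F_3(x,y)$ equals $u_1^{m_1}u_2^{m_2}v_1^{n_1}v_2^{n_2}v_3^{n_3}v_4^{n_4}$ for suitable integers. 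Since $v_1,\dots,v_4$ are compounded associators they are central, and central elements lie in all three nuclei; so it suffices to show that $u_1$ and $u_2$ lie in $N_\mu$, that is, $(a,u_1,e)=(a,u_2,e)=1$ for all $a$, $e\in F_3(x,y)$.

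Next I would reduce the outer arguments. By \eqref{Eq:MidA}, $(a,u_i,e)=(a,e,u_i)((u_i,a,e)$, and $(a,e,u_i)=(a,b,(c,d,f))$-type terms which by \eqref{Eq:13A} are inverses of $((\dots),\dots)$-type terms; combined with Lemma \ref{Lm:Linear} (the compounded associator we are trying to control is itself central once we know nilpotency class $3$), the map $a\mapsto(a,u_i,e)$ is multiplicative in $a$, and likewise in $e$; moreover by Lemma \ref{Lm:AllPowers} applied with the middle entry an associator, powers in the outer slots contribute only via compounded-of-compounded associators, which vanish by Lemma \ref{Lm:2A}. Hence $(a,u_i,e)$ depends only on $a$, $e$ modulo the associator subloop and modulo powers, so it is enough to verify $(s,u_i,t)=1$ for $s$, $t\in\{x,y\}$. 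This is a finite list: $(x,u_1,x)$, $(x,u_1,y)$, $(y,u_1,x)$, $(y,u_1,y)$ and the four analogues with $u_2$. The flexible law \eqref{Eq:Flex} kills $(x,u_1,x)$ and $(y,u_1,y)$ only after we know $u_1$ commutes appropriately; more usefully, $(x,u_1,x)=1$ and $(y,u_1,y)=1$ hold by \eqref{Eq:Flex} directly since these are associators $(a,b,a)$. By \eqref{Eq:13} each remaining term is determined by its mirror, so really only $(x,u_1,y)$, $(x,u_2,y)$ (equivalently their $z_i$-names) need to be shown trivial.

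The core computation is therefore to show $(x,u_1,y)=1$ and $(x,u_2,y)=1$, equivalently — unwinding the $z_i$ notation via \eqref{Eq:13A}, \eqref{Eq:MidA} and Lemma \ref{Lm:Reduction} — that the relevant combination of $z_3$, $z_4$ (hence of $v_1,\dots,v_4$) is trivial. I would obtain this by imitating the automorphism-expansion technique of Lemma \ref{Lm:Reduction}: pick a suitable inner mapping, most naturally $L_{x,y}$ or $R_{x,y}$ or a commutator of translations, apply it to a product such as $x\cdot xy$ or $y\cdot xy$, expand both sides using \eqref{Eq:LInn}, Lemma \ref{Lm:Powers} and Proposition \ref{Pr:AProduct}, and read off a relation among the $z_i$. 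Running two or three such identities and combining them with the relations $z_2=z_3=z_5$, $z_4=z_6=z_7$ already established should pin down $z_3$ and $z_4$ in terms of the $v_i$ and in fact force the particular products that appear in $(x,u_1,y)$, $(x,u_2,y)$ to collapse to $1$. The main obstacle is exactly this bookkeeping: the expansions produce long words in the eight-parameter associator calculus, and the danger is an arithmetic slip in the exponents $\alpha(n)$, $\beta(n)$ or a sign error when converting between the ``accumulate on the left'' and ``accumulate on the right'' conventions. I would therefore organize the computation so that each inner-mapping identity is expanded independently and the resulting relations are solved as a small linear system over $\mathbb Z$, which both controls errors and makes transparent why $u_1$, $u_2\in N_\mu$.
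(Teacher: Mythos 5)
Your reduction to the two identities $(x,u_1,y)=1$ and $(x,u_2,y)=1$ is correct and matches the paper's strategy: centrality of the compounded associators $v_i$, the canonical form of associators, linearity, and the symmetry identities do let you restrict to $u_1$, $u_2$ in the middle slot and to $x$, $y$ in the outer slots. (A small slip on the way: an arbitrary associator of $F_3(x,y)$ is \emph{not} central --- only the compounded ones are --- but your subsequent sentence recovers the correct reduction.) The genuine gap is the final step, which you leave as a plan rather than a computation. In particular, you should not expect to ``pin down'' $z_3$ and $z_4$: they are nontrivial in $F_3(x,y)$ (by Lemma \ref{Lm:Reduction}, $z_3=z_2=v_2$ and $z_4=z_7=v_3$). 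What makes $(x,u_1,y)$ vanish is a cancellation that you already have all the ingredients for. By \eqref{Eq:MidA} and \eqref{Eq:13},
\begin{displaymath}
(x,u_1,y)=(x,y,u_1)(u_1,x,y)=(x,y,u_1)(y,x,u_1)^{-1}=z_3z_5^{-1},
\end{displaymath}
and $z_3=z_5$ is precisely one of the relations of Lemma \ref{Lm:Reduction} that you cite; likewise $(x,u_2,y)=z_4z_6^{-1}=1$ from $z_4=z_6$. So the tools you name (\eqref{Eq:MidA}, \eqref{Eq:13}, Lemma \ref{Lm:Reduction}) suffice, but your proposal never actually combines them; instead it defers to a fresh round of inner-mapping expansions and a linear system that is both unnecessary and unexecuted. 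As written, the core of the proof --- the verification that the ``relevant combination'' is trivial --- is missing, and it is exactly the one-line computation above.
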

\begin{proof}
Thanks to \eqref{Eq:Flex}, \eqref{Eq:13} and Lemmas \ref{Lm:Linear}, \ref{Lm:Canonical} and Proposition \ref{Pr:AProduct}, it suffices to show that $(x,u_1,y) = (x,u_2,y)=1$. By \eqref{Eq:MidA} and \eqref{Eq:13}, $(x,u_1,y) = (x,y,u_1)(u_1,x,y) = (x,y,u_1)(y,x,u_1)^{-1} = z_3z_5^{-1} = 1$, and also $(x,u_2,y) = (x,y,u_2)(u_2,x,y) = (x,y,u_2)(y,x,u_2)^{-1} = z_4z_6^{-1}=1$, where we used Lemma \ref{Lm:Reduction}.
\end{proof}

Recall the mappings $\alpha$, $\beta$ of \eqref{Eq:AlphaBeta}.

\begin{lemma}\label{Lm:Mult}
In $F_3(x,y)$ we have for every $a_1$, $a_2$, $b_1$, $b_2\in \mathbb Z$
\begin{align*}
    &x^{a_1}y^{a_2}\x x^{b_1}y^{b_2}=x^{a_1+b_1}y^{a_2+b_2}\x u_1^{-a_1b_1(a_2+b_2)} u_2^{a_2b_2(a_1+b_1)}\\
    &\phantom{x}\x v_1^{(a_2+b_2)(b_1\af(a_1)+a_1\af(b_1))+a_2(a_1\bt(b_1)+b_1^2\bt(a_1))+b_2(b_1\bt(a_1)+a_1^2\bt(b_1))}\\
    &\phantom{x}\x v_2^{2a_1a_2b_1b_2(a_1+b_1)+(a_2+b_2)(a_1\bt(b_1)+b_1\bt(a_1))+(\bt(a_2)+\bt(b_2))(a_1b_1^2+b_1a_1^2)-a_2b_2\af(a_1+b_1)}\\
    &\phantom{x}\x v_3^{-2a_1a_2b_1b_2(a_2+b_2)-(a_1+b_1)(a_2\bt(b_2)+b_2\bt(a_2))-(\bt(a_1)+\bt(b_1))(a_2b_2^2+b_2a_2^2)+a_1b_1\af(a_2+b_2)}\\
    &\phantom{x}\x v_4^{-(a_1+b_1)(a_2\af(b_2)+b_2\af(a_2))-a_1(a_2\bt (b_2)+b_2^2\bt(a_2))-b_1(b_2\bt(a_2)+a_2^2\bt(b_2))}.
\end{align*}
\end{lemma}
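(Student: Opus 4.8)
The plan is to compute $x^{a_1}y^{a_2}\cdot x^{b_1}y^{b_2}$ directly, peeling off one generator at a time by means of the defining identity $(pq)r=(p\cdot qr)(p,q,r)$ until the product is written as $x^{a_1+b_1}y^{a_2+b_2}$ (single-generator powers multiply trivially by power-associativity) times a product of associators, and then evaluating every associator that appears. The only tools needed are Lemma~\ref{Lm:AllPowers} (for associators of the form $(x^i,x^j,y^k)$, $(x^i,y^j,y^k)$ and $(x^i,y^j,x^k)$, the last being trivial by \eqref{Eq:Flex}), Proposition~\ref{Pr:AProduct} (when an argument is itself a product), and the collapse $z_2=z_3=z_5$, $z_4=z_6=z_7$ of Lemma~\ref{Lm:Reduction} together with \eqref{Eq:13A}, \eqref{Eq:2A} and Lemma~\ref{Lm:Linear}, which turn every compounded associator into a monomial in $v_1,\dots,v_4$. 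It is worth observing at the outset that, modulo the central subloop $\langle v_1,v_2,v_3,v_4\rangle$, the desired formula is exactly the $F_2(x,y)$ multiplication of \cite[Theorem~2.3]{BaGrVo}, i.e.\ $x^{a_1+b_1}y^{a_2+b_2}u_1^{-a_1b_1(a_2+b_2)}u_2^{a_2b_2(a_1+b_1)}$; thus the real content of the lemma is the computation of the four exponents of $v_1,\dots,v_4$, and these, being central, simply accumulate additively over all contributing terms by \eqref{Eq:2A}.

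First I would assemble the needed evaluations. Each of the eight ``base'' compounded associators $((x,x,y),s,t)$ and $((x,y,y),s,t)$ with $s,t\in\{x,y\}$ equals one of $v_1^{-1},\dots,v_4^{-1}$ after \eqref{Eq:13A} and Lemma~\ref{Lm:Reduction}: for instance $((x,x,y),x,x)=v_1^{-1}$, $((x,x,y),x,y)=((x,x,y),y,x)=v_2^{-1}$, $((x,x,y),y,y)=v_3^{-1}$, and symmetrically $((x,y,y),x,x)=v_2^{-1}$, $((x,y,y),x,y)=((x,y,y),y,x)=v_3^{-1}$, $((x,y,y),y,y)=v_4^{-1}$. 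Substituting these into Lemma~\ref{Lm:AllPowers} gives closed forms, with exponents polynomial in $i,j,k$, of the shape $(x^i,x^j,y^k)=u_1^{ijk}\cdot(\text{monomial in }v_1,v_2,v_3)$ and $(x^i,y^j,y^k)=u_2^{ijk}\cdot(\text{monomial in }v_2,v_3,v_4)$, while $(x^i,y^j,x^k)=1$; Proposition~\ref{Pr:AProduct} then produces $(x^{a_1}y^{a_2},x^{b_1},y^{b_2})$ in the same form. I would also record two structural facts used repeatedly in the peeling: $u_1$ and $u_2$ lie in the middle nucleus by Lemma~\ref{Lm:MidA}, so powers of $u_1$, $u_2$ move freely in and out of the middle slot of a product; and every compounded associator is central (class $3$), so powers and products in any slot of a compounded associator come out linearly, the would-be corrections of Lemma~\ref{Lm:Powers} and Proposition~\ref{Pr:AProduct} being doubly compounded, hence trivial by \eqref{Eq:2A}.

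The computation then runs as follows. One has
\[
x^{a_1}y^{a_2}\cdot x^{b_1}y^{b_2}=\bigl((x^{a_1}y^{a_2}\cdot x^{b_1})\cdot y^{b_2}\bigr)\,(x^{a_1}y^{a_2},x^{b_1},y^{b_2})^{-1},
\]
with $x^{a_1}y^{a_2}\cdot x^{b_1}=x^{a_1+b_1}y^{a_2}\,(x^{a_1},x^{b_1},y^{a_2})^{-1}$ (using commutativity and $(x^{a_1},y^{a_2},x^{b_1})=1$). One then splits $(x^{a_1},x^{b_1},y^{a_2})^{-1}$ into its $u_1$-power and its central part, sets the central part aside, multiplies the $u_1$-power and $x^{a_1+b_1}y^{a_2}$ by $y^{b_2}$ --- re-associating via the middle-nucleus property and picking up a further central compounded-associator correction of the type described above --- and finally collects all contributions. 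Simplifying the resulting polynomial exponents with the recursions $\alpha(n+1)=\alpha(n)+n^2+n$, $\beta(n+1)=\beta(n)+2n$ and the reflections $-\alpha(-n)=\alpha(n)$, $2n^2-\beta(-n)=\beta(n)$ recorded after \eqref{Eq:AlphaBeta} yields the four stated exponents; deleting all $v_i$ recovers the $F_2(x,y)$ formula, a useful consistency check.

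The obstacle is purely one of bookkeeping, not of ideas: Lemma~\ref{Lm:AllPowers} and Proposition~\ref{Pr:AProduct} each contribute nine compounded-associator terms, several of which coincide only after \eqref{Eq:13A} and Lemma~\ref{Lm:Reduction}, further terms must be combined across the successive stages of the peeling, and each term must be correctly assigned to one of $v_1,\dots,v_4$ with an exponent cubic in $a_1,a_2,b_1,b_2$ and with the right sign. Organizing the work by first tabulating, once and for all, the base compounded associators and the closed forms of $(x^i,x^j,y^k)$, $(x^i,y^j,y^k)$ and $(x^{a_1}y^{a_2},x^{b_1},y^{b_2})$, and only afterwards executing the peeling, keeps the calculation within reach; it is of the same computational flavor as the machine-assisted verifications used elsewhere in the paper.
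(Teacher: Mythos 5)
Your plan is essentially the paper's own proof: peel the product via the associator identity (and commutativity) into $x^{a_1+b_1}y^{a_2+b_2}$ times the three associators $(x^{a_1+b_1},y^{a_2},y^{b_2})$, $(y^{a_2},x^{a_1},x^{b_1})$, $(y^{b_2},x^{b_1},x^{a_1}y^{a_2})$ plus one compounded correction, evaluate each via Lemma \ref{Lm:AllPowers}, Proposition \ref{Pr:AProduct}, Lemma \ref{Lm:Reduction} and Lemma \ref{Lm:MidA}, and accumulate the central $v_i$-exponents using the recursions for $\alpha$ and $\beta$; your table of the eight base compounded associators $((x,x,y),s,t)$, $((x,y,y),s,t)$ is correct. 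The only gap is that you defer the actual bookkeeping that yields the four stated cubic exponents, which is also the part the paper itself only summarizes after displaying the intermediate closed forms.
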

\begin{proof}
Using \eqref{Eq:13}, we calculate
\begin{align*}
&x^{a_1}y^{a_2}\x x^{b_1}y^{b_2} = (x^{a_1}y^{a_2}\x x^{b_1})y^{b_2}\x (y^{b_2},x^{b_1},x^{a_1}y^{a_2})\\
&\phantom{x}=((x^{b_1+a_1}\x y^{a_2})(y^{a_2},x^{a_1},x^{b_1})\x y^{b_2})(y^{b_2},x^{b_1},x^{a_1}y^{a_2})\\
&\phantom{x}=(x^{a_1+b_1}y^{a_2}\x (y^{a_2},x^{a_1},x^{b_1})y^{b_2})(x^{a_1+b_1}y^{a_2},(y^{a_2},x^{a_1},x^{b_1}),y^{b_2})\x (y^{b_2},x^{b_1},x^{a_1}y^{a_2}).
\end{align*}
By Lemma \ref{Lm:MidA}, we can ignore the compounded associator and continue
\begin{align*}
    &[(x^{a_1+b_1}y^{a_2}\x y^{b_2})(y^{a_2},x^{a_1},x^{b_1})]
    ((y^{a_2},x^{a_1},x^{b_1}),y^{b_2},x^{a_1+b_1}y^{a_2})\x (y^{b_2},x^{b_1},x^{a_1}y^{a_2})\\
    &\phantom{x}=[(x^{a_1+b_1}y^{a_2+b_2}\x (x^{a_1+b_1},y^{a_2},y^{b_2}))(y^{a_2},x^{a_1},x^{b_1})]\\
    &\phantom{x=}\x((y^{a_2},x^{a_1},x^{b_1}),y^{b_2},x^{a_1+b_1}y^{a_2})\x (y^{b_2},x^{b_1},x^{a_1}y^{a_2}).
\end{align*}
Because associators associate with one another, we can rewrite the formula as
\begin{align*}
    x^{a_1}y^{a_2}\x x^{b_1}y^{b_2} =&
        x^{a_1+b_1}y^{a_2+b_2}\x (x^{a_1+b_1},y^{a_2},y^{b_2})(y^{a_2},x^{a_1},x^{b_1})(y^{b_2},x^{b_1},x^{a_1}y^{a_2})\\
        &\x ((y^{a_2},x^{a_1},x^{b_1}),y^{b_2},x^{a_1+b_1}y^{a_2}).
\end{align*}
Now, using Lemmas \ref{Lm:Linear} and \ref{Lm:Reduction} freely,
\begin{displaymath}
    ((y^{a_2},x^{a_1},x^{b_1}),y^{b_2},x^{a_1+b_1}y^{a_2}) = v_2^{a_1b_1a_2b_2(a_1+b_1)}v_3^{a_1b_1a_2^2b_2}.
\end{displaymath}
By Lemma \ref{Lm:AllPowers},
\begin{align*}
    (x^{a_1+b_1},y^{a_2},y^{b_2}) =& u_2^{(a_1+b_1)a_2b_2} v_2^{-\alpha(a_1+b_1)a_2b_2}v_3^{-\beta(a_1+b_1)a_2^2b_2} v_3^{-\beta(a_1+b_1)a_2b_2^2}\\
    &\x v_3^{-(a_1+b_1)\beta(a_2)b_2} v_4^{-(a_1+b_1)\alpha(a_2)b_2} v_4^{-(a_1+b_1)\beta(a_2)b_2^2}\\
    &\x v_3^{-(a_1+b_1)a_2\beta(b_2)} v_4^{-(a_1+b_1)a_2\beta(b_2)} v_4^{-(a_1+b_1)a_2\alpha(b_2)},
\end{align*}
and, similarly,
\begin{align*}
    (y^{a_2},x^{a_1},x^{b_1}) =& u_1^{-a_1b_1a_2}v_3^{\alpha(a_2)a_1b_1}v_2^{\beta(a_2)a_1^2b_1}v_2^{\beta(a_2)a_1b_1^2}\\
    &\x v_2^{a_2\beta(a_1)b_1}v_1^{a_2\alpha(a_1)b_1}v_1^{a_2\beta(a_1)b_1^2}   v_2^{a_2a_1\beta(b_1)}v_1^{a_2a_1\beta(b_1)}v_1^{a_2a_1\alpha(b_1)}.
\end{align*}
Finally, by Proposition \ref{Pr:AProduct} and \eqref{Eq:MidA}, we see that
\begin{displaymath}
    (y^{b_2},x^{b_1},x^{a_1}y^{a_2}) = (y^{b_2},x^{b_1},x^{a_1})v_2^{a_1^2b_1a_2b_2}v_2^{a_1b_1^2a_2b_2}v_3^{a_1b_1a_2b_2^2}.
\end{displaymath}
The associator $(y^{b_2},x^{b_1},x^{a_1})$ can be obtained from the already calculated associator $(y^{a_2},x^{a_1},x^{b_1})$. Putting all these associators together, we arrive at
\begin{displaymath}
    x^{a_1}y^{a_2}\x x^{b_1}y^{b_2} = x^{a_1+b_1}y^{a_2+b_2}\x u_1^{-a_1b_1(a_2+b_2)} u_2^{a_2b_2(a_1+b_1)} v_1^{c_1}v_2^{c_2}v_3^{c_3}v_4^{c_4},
\end{displaymath}
where, after summing up the exponents of the respective $v_i$s and simplifying,
\begin{align*}
    c_1 &= (a_2+b_2)(b_1\af(a_1)+a_1\af(b_1))+a_2(a_1\bt(b_1)+b_1^2\bt(a_1))\\
        &\phantom{=}+b_2(b_1\bt(a_1)+a_1^2\bt(b_1)),\\
    c_2 &= 2a_1a_2b_1b_2(a_1+b_1)+(a_2+b_2)(a_1\bt(b_1)+b_1\bt(a_1))\\
        &\phantom{=}+(\bt(a_2)+\bt(b_2))(a_1b_1^2+b_1a_1^2)-a_2b_2\af(a_1+b_1),\\
    c_3 &= -\beta(a_1+b_1)(a_2^2b_2+a_2b_2^2)-(a_1{+}b_1)(\beta(a_2)b_2+a_2\beta(b_2))\\
        &\phantom{=}+a_1b_1(\alpha(a_2)+\alpha(b_2))+a_1b_1a_2b_2(a_2+ b_2),\\
    c_4 &= -(a_1+b_1)\alpha(a_2)b_2 - (a_1+b_1)\beta(a_2)b_2^2\\
        &\phantom{=}- (a_1+b_1)a_2\beta(b_2)-(a_1+b_1)a_2\alpha(b_2).
\end{align*}
The exponents $c_1$, $c_2$ already have the desired form. To match the exponents $c_3$, $c_4$ with the formula of the lemma, note that $\alpha(a+b) = \alpha(a)+\alpha(b) + ab(a+b)$ while rewriting $c_3$, and substitute $\beta(a) = a^2-a$ into $c_4$.
\end{proof}

\begin{lemma}\label{Lm:FullMult}
In $F_3(x,y)$ we have
\begin{align*}
    &((x^{a_1}y^{a_2}\x u_1^{a_3}u_2^{a_4})\x v_1^{a_5}v_2^{a_6}v_3^{a_7}v_4^{a_8})
        \x ((x^{b_1}y^{b_2}\x u_1^{b_3}u_2^{b_4})\x v_1^{b_5}v_2^{b_6}v_3^{b_7}v_4^{b_8})\\
    &\phantom{x}= (x^{a_1+b_1}y^{a_2+b_2}\x u_1^{a_3+b_3-a_1b_1(a_2+b_2)} u_2^{a_4+b_4+a_2b_2(a_1+b_1)})\\
    &\phantom{x}\x v_1^{a_5+b_5+(a_2+b_2)(b_1\af(a_1)+a_1\af(b_1))+a_2(a_1\bt(b_1)+b_1^2\bt(a_1))+b_2(b_1\bt(a_1)+a_1^2\bt(b_1))-a_1b_1(a_3+b_3)}\\
    &\phantom{x}\x v_2^{a_6+b_6+2a_1a_2b_1b_2(a_1+b_1)+(a_2+b_2)(a_1\bt(b_1)+b_1\bt(a_1))+(\bt(a_2)+\bt(b_2))(a_1b_1^2+b_1a_1^2)}\\
    &\phantom{xxx}{}^{-a_2b_2\af(a_1+b_1)-a_1b_1(a_4+b_4)-(a_3+b_3)(a_1b_2+a_2b_1)}\\
    &\phantom{x}\x v_3^{a_7+b_7-2a_1a_2b_1b_2(a_2+b_2)-(a_1+b_1)(a_2\bt(b_2)+b_2\bt(a_2))-(\bt(a_1)+\bt(b_1))(a_2b_2^2+b_2a_2^2)}\\
    &\phantom{xxx}{}^{+a_1b_1\af(a_2+b_2)-a_2b_2(a_3+b_3)-(a_4+b_4)(a_1b_2+a_2b_1)}\\
    &\phantom{x}\x v_4^{a_8+b_8-(a_1+b_1)(a_2\af(b_2)+b_2\af(a_2))-a_1(a_2\bt(b_2)+b_2^2\bt(a_2)) -b_1(b_2\bt(a_2)+a_2^2\bt(b_2))-a_2b_2(a_4+b_4)}
\end{align*}
for every $a_i$, $b_i\in\mathbb Z$.
\end{lemma}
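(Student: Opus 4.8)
The plan is to reduce the product of two full canonical words to the already-established formula of Lemma~\ref{Lm:Mult}, using only that $v_1,\dots,v_4$ are central and a careful account of the compounded associators produced under reassociation.

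First I would strip off all the $v_i$'s. Each $v_i$ is a compounded associator, hence lies in $Z(Q)$, so the left-hand side equals $M\cdot v_1^{a_5+b_5}v_2^{a_6+b_6}v_3^{a_7+b_7}v_4^{a_8+b_8}$, where $M=(x^{a_1}y^{a_2}\x u_1^{a_3}u_2^{a_4})(x^{b_1}y^{b_2}\x u_1^{b_3}u_2^{b_4})$. Setting $s=x^{a_1}y^{a_2}$, $r=x^{b_1}y^{b_2}$, $U=u_1^{a_3}u_2^{a_4}$, $V=u_1^{b_3}u_2^{b_4}$, I would then massage $M$ using the defining identity of the associator, commutativity, Lemma~\ref{Lm:MidA} (which gives $(s,U,r)=1$ because $U$ lies in the middle nucleus), and the vanishing~\eqref{Eq:2A} of associators with two associator-valued arguments (which also gives $UV=u_1^{a_3+b_3}u_2^{a_4+b_4}$), obtaining
\begin{displaymath}
    M=\bigl(s\x r\bigr)\bigl(u_1^{a_3+b_3}u_2^{a_4+b_4}\bigr)\cdot (s,r,U)^{-1}(sU,r,V)^{-1}.
\end{displaymath}

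Next I would evaluate the two correction associators. Expanding the first argument of $(sU,r,V)$, the factor $U$ contributes only terms with two associator-valued arguments, which die by~\eqref{Eq:2A}, so $(sU,r,V)=(s,r,V)$. The associators $(s,r,u_i)$ and the associators $(x,x,u_i)$, $(x,y,u_i)$, $(y,x,u_i)$, $(y,y,u_i)$ are all compounded, hence central, so Lemmas~\ref{Lm:Linear} and~\ref{Lm:Powers} apply with all their correction terms vanishing: $(s,r,u_1^{c}u_2^{d})=(s,r,u_1)^{c}(s,r,u_2)^{d}$ and
\begin{align*}
    (s,r,u_1)&=v_1^{a_1b_1}v_2^{a_1b_2+a_2b_1}v_3^{a_2b_2}, &(s,r,u_2)&=v_2^{a_1b_1}v_3^{a_1b_2+a_2b_1}v_4^{a_2b_2},
\end{align*}
where I have used Lemma~\ref{Lm:Reduction} together with the definitions of the $v_i$ (so that $z_2=z_3=z_5=v_2$ and $z_4=z_6=z_7=v_3$). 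Raising to the $a_3,a_4$ and $b_3,b_4$ powers and multiplying, one reads off that $(s,r,U)^{-1}(sU,r,V)^{-1}$ is a product of $v_1,\dots,v_4$ with exponents $-a_1b_1(a_3+b_3)$, $-(a_1b_2+a_2b_1)(a_3+b_3)-a_1b_1(a_4+b_4)$, $-a_2b_2(a_3+b_3)-(a_1b_2+a_2b_1)(a_4+b_4)$, $-a_2b_2(a_4+b_4)$ --- exactly the extra terms by which the statement differs from Lemma~\ref{Lm:Mult}.

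Finally I would substitute Lemma~\ref{Lm:Mult} for $s\x r$, move the resulting central $v_i$ powers to the right, and right-multiply by $u_1^{a_3+b_3}u_2^{a_4+b_4}$; this last step produces no new correction, since the associator $\bigl(x^{a_1+b_1}y^{a_2+b_2},u_1^{\ast}u_2^{\ast},u_1^{a_3+b_3}u_2^{a_4+b_4}\bigr)$ is again killed by~\eqref{Eq:2A} and the $u_i$-powers merge by power-associativity and commutativity. Collecting the exponents of $u_1$, $u_2$ and of $v_1,\dots,v_4$, and simplifying with $\alpha(a+b)=\alpha(a)+\alpha(b)+ab(a+b)$ and $\beta(a)=a^2-a$ exactly as at the end of the proof of Lemma~\ref{Lm:Mult}, yields the asserted identity. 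I expect the real work to be the bookkeeping in the middle two steps; the one genuine subtlety is ensuring that \emph{every} term with two associator-valued arguments has been discarded and that Lemma~\ref{Lm:MidA} is available to remove $(s,U,r)$ --- these are what make the reassociations collapse cleanly enough to reduce the entire computation to Lemma~\ref{Lm:Mult}.
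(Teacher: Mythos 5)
Your proposal is correct and takes essentially the same route as the paper: strip off the central $v_i$'s, use Lemma \ref{Lm:MidA} and \eqref{Eq:2A} to reassociate so that the only correction is $(s,r,U)^{-1}(s,r,V)^{-1}=(s,r,u_1^{a_3+b_3}u_2^{a_4+b_4})^{-1}$, evaluate that correction via Lemmas \ref{Lm:Linear} and \ref{Lm:Reduction}, and finish with Lemma \ref{Lm:Mult}. The paper merely performs the reassociation in a slightly different order (pushing $U$ through $rV$ first, yielding the single correction term $(x^{a_1}y^{a_2},x^{b_1}y^{b_2},u_1^{a_3+b_3}u_2^{a_4+b_4})^{-1}$ directly), which is equivalent to your two-step version.
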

\begin{proof}
Using Lemma \ref{Lm:MidA} in the first step and \eqref{Eq:2A} in the second, we have
\begin{align*}
    &(x^{a_1}y^{a_2}\x u_1^{a_3}u_2^{a_4})(x^{b_1}y^{b_2}\x u_1^{b_3}u_2^{b_4})\\
    &\phantom{x}= x^{a_1}y^{a_2}\x (u_1^{a_3}u_2^{a_4}\x (x^{b_1}y^{b_2}\x u_1^{b_3}u_2^{b_4}))\\
    &\phantom{x}= x^{a_1}y^{a_2}\x (x^{b_1}y^{b_2}\x u_1^{a_3+b_3}u_2^{a_4+b_4})\\
    &\phantom{x}= (x^{a_1}y^{a_2}\x x^{b_1}y^{b_2})\x u_1^{a_3+b_3}u_2^{a_4+b_4}(x^{a_1}y^{a_2},x^{b_1}y^{b_2},u_1^{a_3+b_3}u_2^{a_4+b_4})^{-1}.
\end{align*}
Now note that  Lemma \ref{Lm:Linear} yields
\begin{displaymath}
    (x^ay^b,x^cy^d,u_1^eu_2^f) = v_1^{ace}v_2^{acf+ade+bce}v_3^{adf+bcf+bde}v_3^{bdf}.
\end{displaymath}
We are therefore done by Lemma \ref{Lm:Mult}.
\end{proof}

In the proof of the main theorem we will use a Mathematica \cite{Mathematica} code to verify certain properties of the multiplication formula of Lemma \ref{Lm:FullMult}. The code can be downloaded from the website of the third-named author.

\begin{theorem}\label{Th:Main}
Let $F_3(x,y)$ be the free commutative automorphic loop of nilpotency class $3$ on free generators $x$, $y$. Let $u_1 = (x,x,y)$, $u_2 = (x,y,y)$, $v_1 = (x,x,u_1)$, $v_2 = (x,x,u_2)$, $v_3 = (y,y,u_1)$, $v_4=(y,y,u_2)$. Then each element of $F_3(x,y)$ can be written uniquely as $(x^{a_1}y^{a_2}\x u_1^{a_3}u_2^{a_4})v_1^{a_5}v_2^{a_6}v_3^{a_7}v_4^{a_8}$, and $F_3(x,y)$ is isomorphic to $(\mathbb Z^8,*)$, where the multiplication $*$ of exponents is as in Lemma \ref{Lm:FullMult}.
\end{theorem}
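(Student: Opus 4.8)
The plan is to proceed in two directions: first show that the abstract loop $F_3(x,y)$ is a quotient of $(\mathbb Z^8,*)$, and then show that $(\mathbb Z^8,*)$ is itself a commutative automorphic loop of nilpotency class $3$ generated by two elements, whence the two are isomorphic. By Lemma \ref{Lm:Canonical} every element of $F_3(x,y)$ can be written in the canonical form $(x^{a_1}y^{a_2}\x u_1^{a_3}u_2^{a_4})v_1^{a_5}v_2^{a_6}v_3^{a_7}v_4^{a_8}$, and by Lemma \ref{Lm:FullMult} the product of two such words, computed in $F_3(x,y)$, is again in canonical form with exponents given by the formula $*$. Thus the map $\mathbb Z^8\to F_3(x,y)$ sending an exponent vector to the corresponding canonical word is a surjective loop homomorphism, \emph{provided} $(\mathbb Z^8,*)$ is actually a loop; so the first task is to verify directly from the formula of Lemma \ref{Lm:FullMult} that $*$ has a two-sided identity (the zero vector) and that left and right translations are bijective. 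Bijectivity is transparent from the shape of the formula: the first two coordinates of a product are $a_i+b_i$, and having fixed those, the remaining coordinates are affine (indeed, with unit coefficient) in the $b_i$ respectively $a_i$, so translations are triangular with invertible diagonal over $\mathbb Z$.

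Next I would show that $Q:=(\mathbb Z^8,*)$ is commutative, automorphic, and nilpotent of class exactly $3$. Commutativity is checked by inspecting the formula and the defining identities $\alpha(-n)=-\alpha(n)$, $\beta$ and $\alpha$ relations recorded after \eqref{Eq:AlphaBeta}; alternatively it follows because $F_3(x,y)$ is commutative and the canonical form is unique once we know $Q$ is a loop (but to avoid circularity I would verify it directly). For the automorphic property, the key reduction is that a loop is automorphic iff all the generating inner mappings $L_{a,b}$ are automorphisms (using that $\inn(Q)=\langle L_{a,b},R_{a,b},T_a\rangle$, and that in a commutative loop $T_a=1$ and $R_{a,b}$ is conjugate/related to an $L$-type map); so it suffices to check that each $L_{a,b}$ respects $*$. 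This is the step the authors explicitly delegate to the Mathematica code: one writes $L_{a,b}$ explicitly from the formula of Lemma \ref{Lm:FullMult} as a map $\mathbb Z^8\to\mathbb Z^8$ and verifies the polynomial identity $(v*w)L_{a,b}=(vL_{a,b})*(wL_{a,b})$ for all symbolic $a,b,v,w$. I expect this to be the main obstacle in practice: it is a finite but enormous polynomial identity in many integer variables, and the honest path is exactly the machine verification referenced in the paper. Conceptually, though, it is routine, so in the writeup I would cite the code and the general principle, perhaps sanity-checking a low-degree instance by hand.

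Then I would pin down the structure of $Q$ needed to conclude. The subloop generated by $e_1=(1,0,\dots,0)$ and $e_2=(0,1,0,\dots,0)$: computing associators from $*$ shows $(e_1,e_1,e_2)$ corresponds to the vector with a single $1$ in the $u_1$-slot, $(e_1,e_2,e_2)$ to the $u_2$-slot, and the four iterated associators $v_1,\dots,v_4$ to the remaining four slots; since $*$ restricted to these generators reproduces every coordinate, $\langle e_1,e_2\rangle=Q$, so $Q$ is $2$-generated. The subloop $\{(0,0,0,0,a_5,\dots,a_8)\}$ is central (its elements have all associators with everything trivial, visible from the formula), $Q/\text{(that central subloop)}$ has multiplication matching $F_2(x,y)$-like behaviour on $u_1,u_2$ and is nilpotent of class $2$, hence $Q$ is nilpotent of class $\le 3$; and class is exactly $3$ because e.g.\ $v_1=(e_1,e_1,(e_1,e_1,e_2))\ne 1$ is a nontrivial compounded associator. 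Finally, by freeness of $F_3(x,y)$ the assignment $x\mapsto e_1$, $y\mapsto e_2$ extends to a loop homomorphism $\varphi:F_3(x,y)\to Q$; it is surjective since $e_1,e_2$ generate $Q$, and it is injective because the composite $\mathbb Z^8\to F_3(x,y)\xrightarrow{\varphi}Q$ is the identity map on $\mathbb Z^8$ (it sends each canonical word to its own exponent vector, by the definitions of $u_i,v_j$ in $Q$ and Lemma \ref{Lm:FullMult}). Hence $\varphi$ is an isomorphism, the canonical form is unique, and $F_3(x,y)\cong(\mathbb Z^8,*)$ as claimed.
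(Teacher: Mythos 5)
Your proposal is correct and follows essentially the same route as the paper: realize the multiplication formula of Lemma \ref{Lm:FullMult} as a loop structure on $\mathbb Z^8$, verify (by the delegated symbolic computation) that it is a commutative automorphic loop with the prescribed associator values on $e_1,e_2$, and then combine freeness with Lemma \ref{Lm:Canonical} to conclude that the induced homomorphism $x\mapsto e_1$, $y\mapsto e_2$ is an isomorphism and the canonical form is unique. The extra details you supply --- the triangular shape of the translations, the explicit check that $(\mathbb Z^8,*)$ has nilpotency class $3$ and is $2$-generated so that freeness applies --- are points the paper leaves to the machine verification or to the reader, and are consistent with its argument.
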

\begin{proof}
Let $F$ be defined on $\mathbb Z^8$ with multiplication according to Lemma \ref{Lm:FullMult}. Denote by $e_i$ the element of $\mathbb Z^8$ whose only non-zero coordinate is equal to $1$ and is located in position $i$. Straightforward calculation in Mathematica shows that $F$ is a loop with identity element $(0,0,0,0,0,0,0,0)$ such that $(e_1,e_1,e_2) = e_3$, $(e_1,e_2,e_2) = e_4$, $(e_1,e_1,e_3) = e_5$, $(e_1,e_1,e_4) = e_6$, $(e_2,e_2,e_3) = e_7$ and $(e_2,e_2,e_4) = e_8$. Moreover, $F$ is a commutative automorphic loop. (To verify that $F$ is automorphic, the code merely needs to check by symbolic calculation that the inner mappings $L_{a,b}$ are automorphisms of $F$.)

We claim that $F_3(x,y)$ is isomorphic to $F$. Let $f:F_3(x,y)\to F$ be the homomorphism determined by $f(x) = e_1$, $f(y)=e_2$. Because homomorphisms behave well on associators, namely $f((a,b,c)) = (f(a),f(b),f(c))$, the calculation in the previous paragraph shows that $f(u_1)=e_3$, $f(u_2)=e_4$, $f(v_1)=e_5$, $f(v_2)=e_6$, $f(v_3)=e_7$ and $f(v_4)=e_8$. By Lemma \ref{Lm:Canonical}, any element $w$ of $F_3(x,y)$ can be written as $w=(x^{a_1}y^{a_2}\x u_1^{a_3}u_2^{a_4})v_1^{a_5}v_2^{a_6}v_3^{a_7}v_4^{a_8}$, and it now follows that $f(w) = (a_1,a_2,a_3,a_4,a_5,a_6,a_7,a_8)$. This means that $f$ is onto $F$, and also that the exponents $a_i$ in the decomposition of $w$ are uniquely determined by $w$. Hence $f:F_3(x,y)\to F$ is an isomorphism.
\end{proof}

We conclude the paper with some structural information about $F_3(x,y)$.

\begin{proposition}
Let $Q = F_3(x,y)$ be identified with $(\mathbb Z^8,*)$ as in Theorem \ref{Th:Main}. Then $A(Q) = N_\mu(Q) = 0\times 0\times \mathbb Z^6$ and $N_\lambda(Q) = N_\rho(Q) = N(Q) = Z(Q) = 0\times 0\times 0\times 0\times \mathbb Z^4$.
\end{proposition}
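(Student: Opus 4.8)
The plan is to compute each of the five subloops directly from the multiplication formula of Lemma \ref{Lm:FullMult}, using the defining equations for associators and nuclei. First I would establish the inclusion $A(Q)\subseteq 0\times 0\times\mathbb Z^6$: by Theorem \ref{Th:Main} every associator $(a,b,c)$ of $Q$ is a homomorphic image of an associator of $F_3(x,y)$, hence a product of $u_1$, $u_2$, $v_1,\dots,v_4$ with integer exponents, so it lies in $0\times 0\times\mathbb Z^6$; and this set is a normal subloop (it is the image of the center-modulo-nothing part, but more concretely one checks from the formula of Lemma \ref{Lm:FullMult} that it is closed and normal). The reverse inclusion is witnessed by the specific associators $(e_1,e_1,e_2)=e_3$, $(e_1,e_2,e_2)=e_4$, $(e_1,e_1,e_3)=e_5$, $(e_1,e_1,e_4)=e_6$, $(e_2,e_2,e_3)=e_7$, $(e_2,e_2,e_4)=e_8$ already recorded in the proof of Theorem \ref{Th:Main}, which show $e_3,\dots,e_8\in A(Q)$; since $A(Q)$ is a subloop containing these generators, $0\times 0\times\mathbb Z^6\subseteq A(Q)$.

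For the middle nucleus, Lemma \ref{Lm:MidA} states that all associators of $F_3(x,y)$ lie in $N_\mu(Q)$, which gives $0\times 0\times\mathbb Z^6 = A(Q)\subseteq N_\mu(Q)$. For the reverse inclusion I would take a general element $w=(x^{a_1}y^{a_2}\x u_1^{a_3}u_2^{a_4})v_1^{a_5}v_2^{a_6}v_3^{a_7}v_4^{a_8}$ with $(a_1,a_2)\ne(0,0)$ and exhibit nontrivial associators $(b,w,c)$; concretely, using the formula of Lemma \ref{Lm:FullMult} (or, more cheaply, Lemma \ref{Lm:Mult} together with the associator relations), the associator $(x,w,x)$ or $(y,w,y)$ or $(x,w,y)$ picks up a term depending on $a_1$ or $a_2$ that cannot vanish. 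This reduces to a short case analysis on whether $a_1=0$ or $a_2=0$. For $N_\lambda(Q)=N_\rho(Q)$: by Lemma \ref{Lm:SymA}, equation \eqref{Eq:13}, $(a,b,c)=(c,b,a)^{-1}$, so $N_\lambda(Q)=N_\rho(Q)$ automatically; it remains to show this common nucleus equals $Z(Q)$. One inclusion is trivial ($Z(Q)\subseteq N(Q)\subseteq N_\lambda(Q)$). For the other, note $N_\lambda(Q)\subseteq N_\mu(Q)=0\times 0\times\mathbb Z^6$, so we test elements $w=u_1^{a_3}u_2^{a_4}v_1^{a_5}\cdots v_4^{a_8}$; from Lemma \ref{Lm:FullMult} the associators $(w,x,x)$, $(w,x,y)$, $(w,y,y)$ involve $a_3$ and $a_4$ through the $v_i$-exponents (e.g.\ the $-a_1b_1(a_3+b_3)$ term in the $v_1$-exponent), forcing $a_3=a_4=0$, while once $a_3=a_4=0$ the element is central by direct inspection of the formula. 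Finally $Z(Q)=0\times 0\times 0\times 0\times\mathbb Z^4$ should be read off by checking that $v_1,\dots,v_4$ are central (they are compounded associators, hence central by the nilpotency class $3$ hypothesis and Proposition \ref{Pr:Compounded}) and that no element with $(a_3,a_4)\ne(0,0)$ is central, which is exactly the computation just described.

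The main obstacle is the middle-nucleus reverse inclusion and the $N_\lambda=Z$ reverse inclusion: both require producing, for an arbitrary noncentral (resp.\ non-$N_\mu$) canonical word, an explicit associator that is nontrivial, which means extracting the relevant exponent from the rather large formula of Lemma \ref{Lm:FullMult} and verifying it is nonzero for suitable integer choices of the second and third arguments. This is routine but bookkeeping-heavy; I would organize it by first reducing (via $A(Q)\subseteq N_\mu$ and $N_\lambda\subseteq N_\mu$) to words supported on $u_1,u_2,v_1,\dots,v_4$, and then isolating the coefficient of, say, $a_3$ or $a_4$ in the $v_i$-exponents of $(w,e_1,e_1)$ and $(w,e_1,e_2)$, which from Lemma \ref{Lm:FullMult} are linear in $a_3,a_4$ with nonzero coefficients for appropriate choices of the other arguments. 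All of this can be delegated to the same Mathematica verification used for Theorem \ref{Th:Main} if a fully mechanical check is preferred.
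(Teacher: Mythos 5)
Your plan follows essentially the same route as the paper: bound each subloop by evaluating explicit associators of canonical words, using Lemma \ref{Lm:MidA} for $A(Q)\le N_\mu(Q)$, the canonical form of associators for the upper bound $A(Q)\le 0\times 0\times\mathbb{Z}^6$, the recorded values $(e_1,e_1,e_2)=e_3,\dots,(e_2,e_2,e_4)=e_8$ for the lower bound, and suitable test associators for the nuclei. Two points need attention. First, your assertion $N_\lambda(Q)\subseteq N_\mu(Q)$ is not automatic in a loop and you give no reason for it; here it is true, but you must derive it from $N_\lambda(Q)=N_\rho(Q)$ (which follows from \eqref{Eq:13}) together with \eqref{Eq:123}, which gives $(a,w,c)=(a,c,w)(w,a,c)=1$ for every $w\in N_\lambda(Q)\cap N_\rho(Q)$. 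Once justified, this reduction is actually a mild improvement on the paper, which instead computes $(x,x,x^{a_1}y^{a_2}\x u_1^{a_3}u_2^{a_4})$ for a fully general canonical word and then still needs $(y,x,x^{a_1})$ to kill $a_1$; your version only requires $(x,x,u_1^{a_3}u_2^{a_4})=v_1^{a_3}v_2^{a_4}$. Second, drop $(x,w,x)$ and $(y,w,y)$ from your list of candidate tests for the middle nucleus: both vanish identically by \eqref{Eq:Flex}, so only $(x,w,y)$ does any work. The deferred bookkeeping there is exactly the paper's computation $(x,x^{a_1}y^{a_2},y)=u_1^{a_1}u_2^{a_2}\cdot(\text{central factors})$, which by uniqueness of the canonical form is nontrivial precisely when $(a_1,a_2)\ne(0,0)$; note also that the $u_1^{a_3}u_2^{a_4}v_1^{a_5}\cdots$ part of a general $w$ drops out of the middle slot because it already lies in $N_\mu(Q)$ and the extra compounded terms from Proposition \ref{Pr:AProduct} die by \eqref{Eq:2A}. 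With these two repairs the argument is complete and matches the paper's.
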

\begin{proof}
We already know from Lemma \ref{Lm:MidA} that $A(Q)\le N_\mu(Q)$. By Proposition \ref{Pr:AProduct} and Lemmas \ref{Lm:AllPowers}, \ref{Lm:Reduction},
\begin{align*}
    &(x,x^{a_1}y^{a_2},y) =(x,x^{a_1},y)(x,y^{a_2},y)\\
    &\phantom{x=}\x ((x,x^{a_1},y),x^{a_1},y^{a_2})((x,y^{a_2},y),y^{a_2},x^{a_1})((x,x^{a_1},y),y^{a_2},x)\\
    &\phantom{x=}\x ((x,y^{a_2},y),x^{a_1},x)((x,x^{a_1},y),y^{a_2},y)((x,y^{a_2},y),x^{a_1},y)\\
    &\phantom{x}=(x,x^{a_1},y)(x,y^{a_2},y)v_2^{-a_1^2a_2-2a_1a_2}v_3^{-a_1a_2^2-2a_1a_2}\\
    &\phantom{x}=u_1^{a_1}(u_1,x,x)^{\af(a_1)+\bt(a_1)}(u_1,x,y)^{\bt(a_1)}\x u_2^{a_2}(u_2,y,y)^{\af(a_2)+\bt(a_2)}(u_2,y,x)^{\bt(a_2)}\\
    &\phantom{x=}\x v_2^{-a_1^2a_2-2a_1a_2}v_3^{-a_1a_2^2-2a_1a_2}\\
    &\phantom{x}=u_1^{a_1}u_2^{a_2}v_1^{-\af(a_1)-\bt(a_1)}v_2^{-\bt(a_1)-a_1^2a_2-2a_1a_2}
        v_3^{-\bt(a_2)-a_1a_2^2-2a_1a_2}v_4^{-\af(a_2)-\bt(a_2)}.
\end{align*}
Thus, if either $a_1\neq 0$ or $a_2\neq 0$ then $r=(x,x^{a_1}y^{a_2},y)\neq 1$. In other words, if $r\in N_\mu(Q)$ then $r\in 0\times 0 \times\z^6$. We conclude, $A(Q)\leq N_\mu(Q)\leq 0\times 0\times\z^6\leq A(Q)$, so $N_\mu(Q)=A(Q)=0\times 0 \times \z^6$.

Since $Q$ has nilpotency class $3$, we have $0\times 0\times 0\times 0\times \z^4\leq\Z(Q)\leq N_\rho(Q)= N_\lambda(Q)$. Now,
\begin{align*}
    &(x,x,x^{a_1}y^{a_2}\x u_1^{a_3}u_2^{a_4})=(x,x,x^{a_1}y^{a_2})(x,x,u_1^{a_3}u_2^{a_4})=(x,x,x^{a_1}y^{a_2})v_1^{a_3}v_2^{a_4}\\
    &\phantom{x}=[(x,x,y^{a_2})((x,x,y^{a_2}),y^{a_2},x^{a_1})((x,x,y^{a_2}),x^{a_1},x)((x,x,y^{a_2}),x^{a_1},x)]v_1^{a_3}v_2^{a_4}\\
    &\phantom{x}=(x,x,y^{a_2})v_1^{a_3-2a_1a_2}v_2^{a_4-a_1a_2^2}\\
    &\phantom{x}=u_2^{a_2}\x (u_2,y,y)^{\af(a_2)}(u_2,y,x)^{2\bt(a_2)}v_1^{a_3-2a_1a_2}v_2^{a_4-a_1a_2^2}\\
    &\phantom{x}=u_2^{a_2}v_1^{a_3-2a_1a_2}v_2^{a_4-a_1a_2^2}v_3^{2\bt(a_2)}v_4^{\af(a_2)}.
\end{align*}
If $(x,x,x^{a_1}y^{a_2}\x u_1^{a_3}u_2^{a_4})=1$, $a_2$ must be zero. Then $v_1^{a_3}v_2^{a_4}=1$ and thus $a_3=a_4=0$. Therefore, if $(x,x,x^{a_1}y^{a_2}\x w^{a_3}t^{a_4})=1$ then $a_2=a_3=a_4=0$. Finally,
\begin{displaymath}
    (y,x,x^{a_1})=u_1^{-a_1}(u_1^{-1},x,x)^{\af(a_1)+\bt(a_1)}(u_1^{-1},x,y)^{\bt(a_1)}=u_1^{-a_1}v_1^{\af(a_1)+\bt(a_1)}v_2^{\bt(a_1)}.
\end{displaymath}
So, $(y,x,x^{a_1})=1$ implies $a_1=0$. Summarizing, $x^{a_1}y^{a_2}\x u_1^{a_3}u_2^{a_4}\in N_\rho(Q)$ if and only if $a_1=a_2=a_3=a_4=0$. Hence $N_\rho(Q)= \Z(Q)=0\times 0\times 0\times 0\times\z^4$.
\end{proof}


\begin{thebibliography}{99}

\bibitem{BaPhD} D. Barros,
    Commutative automorphic loops,
    PhD dissertation, University of Sao Paulo, in preparation.

\bibitem{BaGrVo} D. Barros, A. Grishkov and P. Vojt\v{e}chovsk\'y,
    Commutative automorphic loops of order $p^3$,
    to appear in \emph{Journal of Algebra and its Applications}.

\bibitem{Br} R. H. Bruck,
    \textit{A Survey of Binary Systems},
    Springer, 1971.

\bibitem{BrPa} R. H. Bruck and L. J. Paige,
    Loops whose inner mappings are automorphisms,
    \textit{Ann. of Math.} (2) \textbf{63} (1956) 308-323.

\bibitem{Cs} P. Cs\"{o}rg\H{o},
    The multiplication group of a finite commutative automorphic loop
    of order of power of an odd prime $p$ is a $p$-group,
    \textit{J. Algebra} \textbf{350} (2012), no. \textbf{1}, 77--83.

\bibitem{JeKiVo1} P. Jedli\v{c}ka, M. Kinyon and P. Vojt\v{e}chovsk\'{y},
    The structure of commutative automorphic loops,
    \textit{Trans. Amer. Math. Soc.} \textbf{363} (2011), no. 1, 365--384.

\bibitem{JeKiVo2} P. Jedli\v{c}ka, M. Kinyon and P. Vojt\v{e}chovsk\'{y},
    Constructions of commutative automorphic loops,
    \textit{Comm. Algebra} \textbf{38} (2010), no. 9, 3243--3267.

\bibitem{JeKiVo3} P. Jedli\v{c}ka, M. Kinyon and P. Vojt\v{e}chovsk\'{y},
    Nilpotency in automorphic loops of prime power order,
    \textit{J. Algebra} \textbf{350} (2012), no. \textbf{1}, 64--76.

\bibitem{JoKiNaVo} K. W. Johnson, M. K. Kinyon, G. P. Nagy and P. Vojt\v{e}chovsk\'{y},
    Searching for small simple automorphic loops,
    \textit{LMS Journal of Computation and Mathematics} \textbf{14} (2011), 200--213.

\bibitem{GrSh} A. N. Grishkov and I. P. Shestakov,
    Commutative Moufang loops and alternative algebras,
    \textit{J. Algebra} \textbf{333} (2011), 1--13.

\bibitem{KiKuPhVo} M. K. Kinyon, K. Kunen, J. D. Phillips and P. Vojt\v{e}chovsk\'{y},
    The structure of automorphic loops,
    in preparation.

\bibitem{Mathematica} Wolfram Research, Inc.,
    \textit{Mathematica}, version 8.0, Wolfram Research, Inc., Champaign, Illinois, 2010

\end{thebibliography}
\end{document}